\def \R{\mathbb{R}}
\newtheorem{theorem}{Theorem}[section]
\newtheorem{lemma}[theorem]{Lemma}
\newtheorem{definition}[theorem]{Definition}
\title
      {Mathematical analysis of weak and strong solutions to an evolutionary model for magnetoviscoelasticity}
\def\eR{\mathbf{R}}
\def\eN{\mathbf{N}}
\newcommand{\ext}{\mathrm{ext}}
\newcommand{\eff}{\mathrm{eff}}
\newcommand{\res}{\mathrm{res}}
\def\d{\; \mathrm{d}}
\def\dx{\; \mathrm{d}x}
\def\dt{\; \mathrm{d}t}
\def\ds{\; \mathrm{d}s}
\def\dvr{\mathop{\mathrm{div}}\nolimits}
\def\tder{\partial_t}
\def\R{\mathbb{R}}
\begin{document}
\author{Martin Kalousek, Joshua Kortum and Anja
Schl\"omerkemper\footnote{Corresponding author:  anja.schloemerkemper@mathematik.uni-wuerzburg.de}}

\maketitle
{\footnotesize
 \centerline{University of W\"urzburg, Institute of Mathematics, Emil-Fischer-Stra\ss e 40,}
   \centerline{97074 W\"urzburg, Germany}
} 

\medskip


\bigskip


\begin{abstract}
The paper is concerned with the analysis of an evolutionary model for magnetoviscoelastic materials in two dimensions. The model consists of a Navier-Stokes system featuring a dependence of the stress tensor on elastic and magnetic terms, a regularized system for the evolution of the deformation gradient and the Landau-Lifshitz-Gilbert system for the dynamics of the magnetization. \\ 
First, we show that our model possesses global in time weak solutions, thus extending work by Bene{\v s}ov\'a et al.\ 2018. Compared to that work, we include the stray field energy and relax the assumptions on the elastic energy density. Second, we prove the local in time existence of strong solutions. Both existence results are based on the Galerkin method. Finally, we show a weak-strong uniqueness property.
\end{abstract}

\section{Introduction}

The system of partial differential equations considered in this article models the evolution of magnetoviscoelastic materials. Through the coupling of elastic and magnetic effects and several nonlinearities, the proof of existence and uniqueness of solutions is a challenge. We will provide global-in-time existence of weak solutios, local-in-time existence of strong solutions and weak-strong uniqueness. 

The coupling of magnetic and elastic interactions is observed in several solid and fluid materials such as giant magnetostrictive materials or magnetorheological fluids, which happen to have various technological applications as,  e.g., sealings or shock absorbers. A fundamental difficulty is that one needs to find a way to model elasticity (which is usually phrased in Lagrangian coordinates) and magnetism (which is modeled in Eulerian coordinates) in a mathematically rigorous way. Following \cite{BeFoLiSc,Forster,linliuzhang,LiuWalkington2001}, we apply a variational approach that allows us to unify the different coordinates. 

In the literature, evolutionary models for magnetoelastic materials were considered under the assumption of quasistatic evolution \cite{KruzikStefanelliZeman2015} or in the setting of small strain, see,  e.g., \cite{CarbouEfendievFabrie2011,CCVVC,ChipotShafrir_etal2009,EET}.

We consider the following system of partial differential equations for the velocity $v: (0,T)\times \Omega  \to \R^2$, the deformation gradient $F: (0,T)\times \Omega \to \R^{2\times 2}$ and the magnetization $M: (0,T)\times \Omega  \to \R^3$, where $\Omega \subset \R^2$ is a bounded $C^\infty$-domain and $T>0$:
\begin{align}
	\tder v+(v\cdot\nabla)v+\nabla p-\Delta v&=\dvr(W'(F)F^T)-\dvr(\nabla M\odot\nabla M) +\nabla^T H_\ext M, \label{NSSystem}\\
	\dvr v&=0,\\
	\tder F+(v\cdot\nabla)F-\nabla vF&=\kappa\Delta F,\label{DefGradSystem}\\
	\tder M+(v\cdot\nabla)M&=\Delta M+|\nabla M|^2M   -M\times H_\eff- M\times (M \times 
	H_\res), \label{MagnetSystem} 
\end{align}
where $W$ is the elastic energy density and $H_\eff = \Delta M - (M\cdot e_3) e_3 + H_\ext$ with $e_3$ being the third canonical basis vector of $\R^3$, and $H_\res = H_\eff - \Delta M$ in $(0,T)\times\Omega$. This model was derived in a variational approach starting from the kinetic energy, the elastic energy and the micromagnetic energy; we refer to  \cite{BeFoLiSc} for an overview and to \cite{Forster} for further details. 

In this article we extend the previous work of \cite{BeFoLiSc} and \cite{Forster} on the mathematical analysis
of the considered equations. In Theorem~\ref{Thm:WeakSol}, we prove that global-in-time weak solutions exist under a more general assumption on the elastic energy density~$W$: We require $W$ to be convex instead of strong convexity. This is a particular  consequence of Ladyzhenskaya's inequality (see \eqref{addestimate}) which allows to control $|F|^2$ independently of the elastic energy. We are also able to include the effect of the stray field energy represented by the term $-(M\cdot e_3)e_3$
in the effective magnetic field $H_\eff$. 

We prove weak-strong uniqueness in Theorem~\ref{Thm: WSUniqueness}. It is thus of interest to construct local-in-time strong solutions for smooth enough initial data, see Theorem~\ref{Thm:Main}. These serve as proper candidates for a comparison with the weak solutions leading to a weak-strong uniqueness result.

Uniqueness in two dimensions was proven for an approximative system  in \cite{SchlZab}. The Landau-Lifshitz-Gilbert equation was substituted by the gradient flow equation
$$ \partial_t M + (v\cdot \nabla) M = \Delta M + \frac{1}{\mu^2} (1-|M|^2) M$$
with $\mu>0$.
Existence of weak solutions for this gradient flow system is established in \cite{Forster}. Recently, Zhao \cite{Zhao2018} proved the existence of strong close-to equilibrium solutions to the gradient flow system in the periodic setting with $\kappa=0$ and the simplifying assumption $W(F)= \frac12 |F|^2$.

From a more general perspective, the model \eqref{NSSystem}--\eqref{MagnetSystem} encompasses multiple difficulties of the related models for complex fluids. Only taking into account the momentum equation and the Landau-Lifshitz-Gilbert equation, the system is related to the  Ericksen-Leslie model for liquid crystal flows. For this, uniqueness of weak solutions was shown by Lin and Wang in \cite{lin}. The transport equation for $F$  is similar as the ones for viscoelastic flows. There are very few results dealing with existence theory. Existence of strong solutions to \eqref{NSSystem}--\eqref{DefGradSystem} ($M=0$) was proved in 
\cite{linliuzhang} for small and smooth enough initial data. To the authors' knowledge, there does not exist a sufficient existence theory for weak solutions with $\kappa =0$ so far. An existence proof for dissipative solutions to the system with $\kappa=0$ is given in \cite{Kalousek}. From this point of view, the mathematical assumption $\kappa >0$ (but small) is necessary so far to ensure the existence of weak solutions.

The outline of this article is as follows: In Section 2 we introduce the notation and state the main theorems, i.e., global-in-time existence of weak solutions (Theorem \ref{Thm:WeakSol}), local-in-time existence of strong solutions (Theorem \ref{Thm:Main}) and the 
weak-strong uniqueness (Theorem \ref{Thm: WSUniqueness}). Section 3 is concerned with the proof of the existence theorems while the weak-strong uniqueness property is proven in Section 4. The Appendix contains some often used inequalities.

\section{Statement of the main results}

We analyze the system \eqref{NSSystem}--\eqref{MagnetSystem} accompanied with the boundary and initial conditions
\begin{equation}\label{BoundInitCond}
\begin{alignedat}{2}
	v=&0 &&\text{ on }(0,T)\times\partial\Omega,\\
	F=&0 &&\text{ on }(0,T)\times\partial\Omega,\\
	(\nabla M)n=&0 &&\text{ on }(0,T)\times\partial\Omega,\\
	v(0,\cdot)=&v_0 &&\text{ in }\Omega,\\
	F(0,\cdot)=&F_0 &&\text{ in }\Omega,\\
	M(0,\cdot)=&M_0 &&\text{ in }\Omega,
\end{alignedat}
\end{equation}
where $n$ denotes the outer normal to $\Omega$, and the functions $v_0$ and $ M_0$ satisfy $\dvr v_0=0$, $|M_0|\equiv 1$  in $\Omega$. 

We begin this section with the introduction of the notation. Generic constants are denoted by $c$. Let $A,B\in\eR^{d\times d}$; then $A\odot B$ denotes the matrix $A^\top B$. Let $a\in \eR^m, b\in\eR^n$; then $a\otimes b$ denotes an $m\times n$ matrix having the product $a_ib_j$ on the entry $i,j$.  For $k\in\eN$ and $q\in[1,\infty]$, $L^q(\Omega)$ and $W^{k,q}(\Omega)$ denote the standard Lebesgue and Sobolev spaces. In order to keep the notation short, we write e.g. $ \|\cdot \|_q$ instead of $\|\cdot \|_{L^q(\Omega)^m}$, $ \|\cdot \|_{k,q}$ instead of $\|\cdot \|_{W^{k,q}(\Omega)^{m\times n}}$, and $\|\cdot \|_{L^q(0,t;L^r(\Omega))}$ instead of $\|\cdot \|_{L^q(0,t;L^r(\Omega)^{m})}$, etc. We set
\begin{align*}
	L^2_{\dvr}(\Omega)^2&=\overline{\{u\in C^\infty_c(\Omega)^2:\ \dvr u=0 \text{ in }\Omega\}}^{\|\cdot\|_2},\\
	W^{1,2}_{0,\dvr}(\Omega)^2&=\overline{\{u\in C^\infty_c(\Omega)^2:\ \dvr u=0 \text{ in }\Omega\}}^{\|\cdot\|_{1,2}},\\
	W^{2,2}_n(\Omega)^3&=\{u\in W^{2,2}(\Omega)^3:(\nabla u) \operatorname{n}=0\text{ on }\partial\Omega\}.
\end{align*}
As already mentioned, we show the existence of weak and  strong solutions to the system \eqref{NSSystem}--\eqref{MagnetSystem} in two dimensions. To this end, we first define what we mean by these notions.

\begin{definition}\label{NotionSol}
Let $\Omega \subset \eR^2$ be a $C^\infty$-domain and let $T>0$. Then we call $(v,F,M)$ enjoying the regularity 
\begin{align*}
	v & \in L^\infty(0,T; L^2_{\dvr}(\Omega)^2) \cap L^2(0,T; W_{0,\dvr}^{1,2}(\Omega)^2),\\
	F & \in L^\infty(0,T; L^2(\Omega)^{2 \times 2}) \cap L^2(0,T; W_{0}^{1,2}(\Omega)^{2\times 2}),\\
	M & \in L^\infty(0,T; W^{1,2}(\Omega)^3) \cap L^2(0,T; W^{2,2}(\Omega)^3)
\end{align*}
a \emph{weak solution} of the system \eqref{NSSystem}--\eqref{MagnetSystem} accompanied with initial/boundary conditions \eqref{BoundInitCond} if it satisfies the boundary value condition in the sense of traces as well as the initial conditions
\begin{equation}\label{ini-cond}
v (t,\cdot)\xrightarrow{L^2(\Omega)} v_0(\cdot), \qquad F (t,\cdot)\xrightarrow{L^2(\Omega)} F_0(\cdot), \qquad M (t,\cdot)\xrightarrow{W^{1,2}(\Omega)} M_0(\cdot),
\end{equation}
and if it fulfills the system 
\begin{equation}\label{WForm}
	\begin{split}
	&\int_0^T \int_\Omega -v \cdot \tder \phi + (v\cdot\nabla)v\cdot\phi-\left(\nabla M\odot\nabla M-W'(F)F^\top-\nabla v\right)\cdot\nabla\phi \\
	& \qquad \qquad -(\nabla H_\ext^\top M)\cdot\phi\dx\dt= \int_\Omega v_0 \cdot \phi(0,x)\dx,\\
	&\int_0^T \int_\Omega -F \cdot \tder \xi +(v\cdot\nabla)F\cdot\xi-(\nabla vF)\cdot\xi+\kappa \nabla F\cdot\nabla\xi\dx\dt \\
	& \qquad \qquad  = \int_\Omega F_0 \cdot \xi(0,x) \dx, \\
	&\tder M + (v\cdot \nabla) M = \Delta M + |\nabla M|^2 M - M \times H_\eff - 
	M\times  (M \times H_\res)
	\end{split}
\end{equation}
for all $\phi(t,x) = \phi_1(t)\phi_2(x)$ with $\phi_1 \in W^{1,\infty}(0,T)$ satisfying $\phi_1(T)=0$ and $\phi_2 \in W^{1,2}_{0,\dvr}(\Omega)^2$, for all $\xi(t,x) = \xi_1(t)\xi_2(x)$ with $\xi_1 \in W^{1,\infty}(0,T)$ satisfying $\xi_1(T)=0$ and $\xi_2 \in W_0^{1,2}(\Omega)^{2 \times 2}$, and the last equation a.e.\ on $\Omega \times (0,T)$.
\end{definition}
\begin{definition}
A triple $(v,F,M)$ is called a \emph{strong solution} to  the system \eqref{NSSystem}--\eqref{MagnetSystem} on $(0,T)$ with an associated pressure $p$ if it is a weak solution and enjoys the regularity
\begin{eqnarray*}
	 v & \in & L^\infty (0,T;W^{1,2}_{0\dvr}(\Omega)^{2})\cap L^2(0,T;W^{2,2}(\Omega)^{2}),\\ 
	\tder v & \in & L^2(0,T;L^2(\Omega)^2), \\
	 F &  \in &L^\infty (0,T;W_0^{1,2}(\Omega)^{2\times 2})\cap L^2(0,T;W^{2,2}(\Omega)^{2\times 2}),\\
	\tder F &\in & L^2(0,T;L^2(\Omega)^{2\times 2}),\\
	 M & \in &L^\infty(0,T;W^{2,2}_n(\Omega)^3)\cap L^2(0,T;W^{3,2}(\Omega)^3),\\
	 \tder M&\in& L^2(0,T;W^{1,2}(\Omega)^3),\\
	p & \in &L^2(0,T;W^{1,2}(\Omega)), \int_\Omega p(t,x)\dx=0,
\end{eqnarray*}
and equations \eqref{NSSystem}, \eqref{DefGradSystem} and \eqref{MagnetSystem} are satisfied a.e.\ in $(0,T)\times\Omega$. 
\end{definition}

In order to state precisely our main theorems we need to specify certain aspects of the data. At first, the elastic energy density $W: \eR^{2 \times 2} \to \eR$ should be a convex $C^2$-function satisfying the following growth conditions:
\begin{itemize}
	\item There exists a positive constant $C_1$ such that
				\begin{equation} C_1 (|A|^2-1) \leq W(A) \leq C_1 (|A|^2 +1), \qquad \forall A \in \eR^{2 \times 2}.
				\label{W1} \end{equation}
	\item There exists a positive constant $C_2$ such  that
				\begin{equation} |W'(A_1)-W'(A_2)| \leq C_2|A_1 - A_2|, \qquad \forall A_1, A_2 \in \eR^{2 \times 2},
				\label{W2} \end{equation}
				i.e., such that the derivative is Lipschitz continuous.
	\item There exists a positive constant $C_3$ such that 
				\begin{equation} |W''(A)|\leq C_3, \qquad \forall A \in \eR^{2 \times 2}.
				\label{W3} \end{equation}
\end{itemize}
Note that the growth conditions are not independent of each other. The convexity and differentiability of $W$ together with the first growth condition imply the other two. For technical reasons we also require $W'(0)=0$. This is also a consequence if we require the frame-indifference of $W$, i.e., $W(QF)=W(F)$ for all $Q \in SO(2)$ and all $F \in \eR^{2 \times 2}$.

With this in mind, our first theorem states the existence of global-in-time weak solutions.
\begin{theorem}[Existence of weak solution] \label{Thm:WeakSol}
Let $\Omega \subset \eR^2$ be a $C^\infty$-domain and let $T > 0$ be the final time of the evolution. Let $W \in C^2(\eR^{2\times 2})$ be convex and  satisfy  \eqref{W1}--\eqref{W3}. In addition, assume that
\begin{align*}
	H_\ext &\in C([0,T];L^2(\Omega)^3)\cap L^2(0,T;L^\infty(\Omega)^3)\cap L^3(0,T;W^{1,4}(\Omega)^3),\\
	\tder H_\ext&\in L^1(0,T;L^1(\Omega)^3)
\end{align*}
and $v_0\in L^2_{\mathrm{div}}( \Omega)^2$, $F_0\in L^2(\Omega)^{2 \times 2}$ and $M_0\in W^{2,2}(\Omega)^3$ with $|M_0| \equiv 1$ a.e.\ on $\Omega$.
Moreover, let the initial data and the external field satisfy the smallness condition 
\begin{align} \begin{split}\label{smallinitialdatacondThmB}
\int_\Omega &\frac{1}{2} |v_0|^2 + \frac{1}{2}|\nabla M_0|^2 + \frac12 (M_0 \cdot e_3)^2+ W(F_0) \dx \\
 & + 2\|H_\mathrm{ext}\|_{L^\infty(0,T; L^1(\Omega)^3)} + \|\partial_t H_\mathrm{ext}\|_{L^1(0,T; L^1(\Omega)^3)} < \frac{1}{\widetilde C} \end{split}
\end{align}
for a suitably large constant $\widetilde C>0$ depending just on $\Omega$. Then there exists a weak solution of the system \eqref{NSSystem}--\eqref{MagnetSystem} accompanied with initial/boundary conditions \eqref{BoundInitCond} in the sense of Definition \ref{NotionSol}.
\end{theorem}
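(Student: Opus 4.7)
The plan is to follow the Faedo--Galerkin programme of \cite{BeFoLiSc,Forster}, taking care to accommodate the two new features of the statement: the stray-field contribution $(M\cdot e_3)e_3$ in the effective field, and the fact that the elastic density $W$ is only convex rather than strongly convex. I would choose orthonormal bases of eigenfunctions of the Stokes operator, of the Dirichlet--Laplacian and of the Neumann--Laplacian; at level $n\in\eN$, project equations \eqref{NSSystem}--\eqref{MagnetSystem} onto the span of the first $n$ elements of the respective basis. Picard--Lindel\"of then yields a unique local-in-time approximation $(v_n,F_n,M_n)$, set up so that the constraint $|M_n|=1$ is preserved at every discrete level by an auxiliary projection in the magnetization step.

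\textbf{Uniform a priori bounds.} Testing the discrete $v_n$-equation with $v_n$, the discrete $F_n$-equation with $W'(F_n)$, and the discrete LLG equation with $-\Delta M_n+(M_n\cdot e_3)e_3-H_\ext$, and adding the resulting identities, the cross terms $\int W'(F_n)F_n^\top:\nabla v_n$, $\int \nabla M_n\odot\nabla M_n:\nabla v_n$ and $\int\nabla^\top H_\ext M_n\cdot v_n$ cancel pairwise, and one obtains an energy identity of the form
\begin{align*}
& \tfrac{d}{dt}\left[\tfrac12\|v_n\|_2^2+\int_\Omega W(F_n)+\tfrac12\|\nabla M_n\|_2^2+\tfrac12\|M_n\cdot e_3\|_2^2-\int_\Omega H_\ext\cdot M_n\right] \\
& \qquad +\|\nabla v_n\|_2^2+\kappa\int_\Omega W''(F_n)\nabla F_n:\nabla F_n + \text{LLG dissipation} \leq c\|\tder H_\ext\|_1.
\end{align*}
Because $W$ is only convex, \eqref{W1} yields only $\|F_n\|_2^2\lesssim \int_\Omega W(F_n)+1$ and no coercivity on $\|\nabla F_n\|_2$; the 2D Ladyzhenskaya inequality \eqref{addestimate} --- specifically $\|F_n\|_4^4\lesssim\|F_n\|_2^2\|\nabla F_n\|_2^2$ --- combined with the Lipschitz bound \eqref{W2} then permits the cubic magnetic contributions and the stray-field term to be absorbed into the dissipation provided the left-hand side of \eqref{smallinitialdatacondThmB} is small enough. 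A continuation argument extends the approximation to $[0,T]$ with bounds uniform in $n$ in the regularity classes of Definition \ref{NotionSol}.

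\textbf{Compactness and limit passage.} Bootstrapping the equations provides bounds on $\tder v_n$, $\tder F_n$, $\tder M_n$ in appropriate dual spaces, so that Aubin--Lions yields strong convergence of $v_n$ and $F_n$ in $L^2(0,T;L^2(\Omega))$ and of $\nabla M_n$ in $L^2(0,T;L^2(\Omega))$; interpolation between $L^\infty(W^{1,2})$ and $L^2(W^{2,2})$ for $M_n$ upgrades the latter to $L^4(0,T;L^4(\Omega))$, which is exactly what is needed to pass to the limit in the cubic term $|\nabla M_n|^2M_n$ and in the magnetic stress $\nabla M_n\odot\nabla M_n$. The Lipschitz property \eqref{W2} handles the limit of $W'(F_n)F_n^\top$, and the strong $L^2$ convergence of $M_n$ preserves the saturation $|M|=1$ a.e.\ in the limit.

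\textbf{Main obstacle.} The critical step is closing the energy estimate of the second paragraph in the absence of strong convexity of $W$ and in the presence of the cubic LLG nonlinearities $|\nabla M_n|^2M_n$ and $M_n\times(M_n\times H_\res)$ together with the newly included stray-field contribution. Reconciling these relies on the two-dimensional Ladyzhenskaya inequality and on the smallness condition \eqref{smallinitialdatacondThmB}; the normalization $W'(0)=0$ is also indispensable here, since it is precisely what turns \eqref{W2} into the pointwise estimate $|W'(F_n)|\leq C_2|F_n|$ needed to place the elastic stress in the function space in which the energy manipulation closes.
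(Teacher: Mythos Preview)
Your overall architecture (Galerkin, energy estimate, Aubin--Lions) matches the paper, but two points are off.

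\textbf{The Galerkin scheme.} The paper projects \emph{only} the momentum equation onto $H^n$; for a given $v_n$, the pair $(F_n,M_n)$ is obtained as a \emph{full} weak/strong solution of \eqref{DefGradSystem}--\eqref{MagnetSystem} via the Schauder fixed-point argument of \cite{BeFoLiSc}, not as a finite-dimensional projection. This is precisely what makes $|M_n|\equiv1$ automatic from the LLG structure. If instead you project $M$ onto Neumann eigenfunctions, the pointwise constraint is lost at every finite level, and the unspecified ``auxiliary projection'' does not repair this: the energy identity you write, and the later handling of $|\nabla M_n|^2M_n$ and $M_n\times(M_n\times H_\res)$, rely on $|M_n|\equiv1$ to kill otherwise uncontrollable terms.

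\textbf{The role of \eqref{addestimate}.} Your account of how mere convexity of $W$ is accommodated conflates two independent mechanisms. After testing the $F_n$-equation with $W'(F_n)$, the dissipation term $\kappa\int W''(F_n)\nabla F_n:\nabla F_n$ is only nonnegative and gives no control of $\nabla F_n$. The paper recovers $\nabla F_n\in L^2(0,T;L^2)$ by an \emph{additional, separate} test of the $F_n$-equation with $F_n$ itself --- this is exactly \eqref{addestimate} --- and Ladyzhenskaya for $F_n$ enters there only to bound $\|F_n\|_4^2\|\nabla v_n\|_2$. This step has nothing to do with the LLG nonlinearities or the stray field. The cubic magnetic term is handled by a \emph{different} application of Ladyzhenskaya, $\|\nabla M_n\|_4^4\le c\|\nabla M_n\|_2^2\|\Delta M_n\|_2^2$ (see \eqref{LadMatrix}), and it is here --- not in the elastic part --- that the smallness condition \eqref{smallinitialdatacondThmB} is invoked, to absorb $c\|\nabla M_n\|_2^2\|\Delta M_n\|_2^2$ into $\int_0^t\|\Delta M_n\|_2^2$. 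Your sentence linking \eqref{addestimate}, \eqref{W2} and the ``cubic magnetic contributions'' mixes these two mechanisms; without the extra test by $F_n$, the required $L^2(0,T;W^{1,2}_0)$ bound on $F_n$ is simply absent and the argument does not close.
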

Additionally, we have the existence of local-in-time strong solutions.
\begin{theorem}[Existence of a strong solution]\label{Thm:Main}
Let $\Omega\subset\eR^2$ be a $C^\infty$-domain and $T>0$. Let $W\in C^2(\eR^{2\times 2})$ be convex and satisfy (\ref{W1})--(\ref{W3}). In addition, assume that
\begin{align*}
	H_\ext &\in C([0,T];L^2(\Omega)^3)\cap L^2(0,T;L^\infty(\Omega)^3)\cap L^3(0,T;W^{1,4}(\Omega)^3),\\
	\tder H_\ext&\in L^1(0,T;L^1(\Omega)^3)
\end{align*}
and $v_0\in W^{1,2}_{0,\dvr}(\Omega)^2$, $F_0\in W^{1,2}_0(\Omega)^{2\times 2}$ and $M_0\in W^{2,2}_n(\Omega)^3$. Then there exists $T^\ast\in(0,T]$ and a strong solution to \eqref{NSSystem}--\eqref{MagnetSystem} on $\Omega \times (0,T^\ast)$ accompanied with boundary and initial conditions \eqref{BoundInitCond} in the sense of traces and the sense of \eqref{ini-cond}.
\end{theorem}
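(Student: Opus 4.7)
The plan is to adapt the Galerkin scheme used for Theorem~\ref{Thm:WeakSol} and derive higher-order energy estimates valid on a short time interval. No smallness condition is required here; only the improved regularity of the initial data matters for closing the higher norms. Throughout I would keep track of the dependencies carefully so that the final time $T^\ast$ depends only on the initial norms and on the prescribed regularity of $H_\ext$.

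First I would set up the Galerkin approximation with bases adapted to the boundary conditions: eigenfunctions $\{w_i\}$ of the Stokes operator in $W^{1,2}_{0,\dvr}(\Omega)^2$, eigenfunctions $\{\varphi_j\}$ of the Dirichlet Laplacian in $W^{1,2}_0(\Omega)^{2\times 2}$, and eigenfunctions $\{\psi_k\}$ of the Neumann Laplacian lying in $W^{2,2}_n(\Omega)^3$. Each basis is compatible with its associated Laplace operator, which is essential for testing the Galerkin equations with the operators $-\Delta v^n$, $-\Delta F^n$ and $\Delta^2 M^n$ inside the finite-dimensional subspaces. The resulting ODE system for the Galerkin coefficients is locally solvable by Cauchy--Lipschitz, and the basic $L^\infty L^2 \cap L^2 W^{1,2}$ energy bounds from the proof of Theorem~\ref{Thm:WeakSol} persist on the whole interval.

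The heart of the proof is to upgrade these to higher-order estimates. Testing the momentum equation with $-\Delta v^n$ produces $\tfrac12\tder\|\nabla v^n\|_2^2 + \|\Delta v^n\|_2^2$ plus trilinear corrections controlled via Ladyzhenskaya's inequality in 2D; the elastic stress is Lipschitz in $F^n$ by \eqref{W2}, so its contribution reduces to $\|\nabla F^n\|_2\|F^n\|_4$-type products, while the Maxwell term $\dvr(\nabla M^n \odot \nabla M^n)$ and the Zeeman term $\nabla^T H_\ext M^n$ are handled using the assumed integrability of $H_\ext$ together with 2D Sobolev embeddings. Testing the $F$-equation with $-\Delta F^n$ similarly yields $\tfrac12\tder\|\nabla F^n\|_2^2 + \kappa\|\Delta F^n\|_2^2$ with trilinear terms of the same flavor. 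Testing the magnetization equation with $\Delta^2 M^n$ (compatible with the Neumann condition $(\nabla M)n=0$) yields $\tfrac12\tder\|\Delta M^n\|_2^2 + \|\nabla\Delta M^n\|_2^2$; the nonlinearities $|\nabla M^n|^2 M^n$, $M^n\times H_\eff^n$ and $M^n\times(M^n\times H_\res^n)$ are estimated using $W^{1,2}\hookrightarrow L^q$ for all $q<\infty$ in 2D together with Gagliardo--Nirenberg interpolation. Setting
\[ Y^n(t) := \|\nabla v^n(t)\|_2^2 + \|\nabla F^n(t)\|_2^2 + \|\Delta M^n(t)\|_2^2, \]
and $D^n$ the sum of the matching higher-order dissipation terms, I expect a differential inequality
\[ \tder Y^n + D^n \leq C(1+Y^n)^{\alpha} + \Phi(t), \qquad \Phi \in L^1(0,T),\ \alpha > 1, \]
with $C$ independent of $n$. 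A standard ODE comparison then yields $T^\ast>0$ depending only on $Y^n(0)$ and the norms of $H_\ext$ on which $Y^n$ remains uniformly bounded.

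Finally, the uniform higher-order bounds allow passage to the limit $n\to\infty$ via Aubin--Lions compactness, giving the required regularity of $(v,F,M)$ and a.e.\ satisfaction of the equations; the pressure $p\in L^2(0,T^\ast;W^{1,2}(\Omega))$ with vanishing mean is recovered from the momentum equation via de Rham's theorem applied pointwise in time. I expect the main obstacle to be closing the differential inequality for the magnetization: the Landau--Lifshitz--Gilbert equation contains several quadratic and cubic nonlinearities involving $\Delta M$ and $\nabla M$, and their interaction with the test function $\Delta^2 M^n$ forces careful integration by parts respecting the Neumann condition on $\nabla M$, together with sharp 2D interpolation, in order to keep the exponent $\alpha$ on the right-hand side low enough for the ODE comparison to yield a nontrivial existence time.
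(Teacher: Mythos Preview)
Your overall strategy is sound and leads to the same conclusion, but the approximation scheme you sketch differs from the paper's in a way worth noting. The paper does \emph{not} perform a full Galerkin in all three unknowns: only $v$ is discretised in the Stokes eigenbasis, while for each $v_n$ the pair $(F_n,M_n)$ is obtained as the \emph{exact} solution of \eqref{DefGradSystem}--\eqref{MagnetSystem}; this is the content of Lemma~\ref{KeyLemma}, which in particular upgrades the regularity of $F_n$ so that testing \eqref{DGFinDim} by $-\Delta F_n$ is legitimate. The advantage of this semi-Galerkin route is that the constraint $|M_n|\equiv 1$ is preserved exactly, which the paper then uses repeatedly in the estimates of $I^n_8,\dots,I^n_{16}$. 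Your full Galerkin in $(v,F,M)$ avoids the need for Lemma~\ref{KeyLemma} (the approximants are automatically smooth), and your choice to test the LLG equation with $\Delta^2 M^n$ is equivalent, after integration by parts using that the Neumann eigenbasis is stable under $\Delta$, to the paper's procedure of differentiating and testing with $-\nabla\Delta M_n$; the crucial cancellation of $\int(\Delta v_n\cdot\nabla)M_n\cdot\Delta M_n$ between the momentum and magnetisation identities survives. The price you pay is that $|M^n|\equiv 1$ fails at the approximate level, so you must carry $\|M^n\|_2^2$ (or $\|M^n\|_\infty$ via $W^{2,2}\hookrightarrow L^\infty$) inside your functional $Y^n$ and close all estimates simultaneously. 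One small slip: the statement that the basic $L^\infty L^2\cap L^2 W^{1,2}$ bounds from Theorem~\ref{Thm:WeakSol} ``persist on the whole interval'' is not correct without smallness, since the right-hand side of \eqref{InterMedIneq} contains $\|\nabla M_n\|_2^2\|\Delta M_n\|_2^2$; you should instead add that inequality to your higher-order one and close the combined system on a short interval via the ODE comparison, exactly as you do afterwards and as the paper does with $z'=c(1+z^3)$.
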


Our third result yields a comparison of these two different notions of solutions.
\begin{theorem}[Weak-strong uniqueness] \label{Thm: WSUniqueness}
Strong solutions of \eqref{NSSystem}--\eqref{MagnetSystem} subject to \eqref{BoundInitCond} are unique in the class of weak solutions emanating from the same initial data.
\end{theorem}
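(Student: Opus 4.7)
The plan is to apply a relative-energy argument. Let $(\bar v,\bar F,\bar M)$ be the strong solution furnished by Theorem~\ref{Thm:Main} on an interval $[0,T^\ast]$ and $(v,F,M)$ any weak solution of Definition~\ref{NotionSol} sharing the same initial datum. Set $u:=v-\bar v$, $G:=F-\bar F$, $N:=M-\bar M$, and introduce the relative energy
\begin{equation*}
\mathcal R(t):=\tfrac12\|u(t)\|_2^2+\tfrac12\|G(t)\|_2^2+\tfrac12\|N(t)\|_{1,2}^2.
\end{equation*}
The aim is to derive a Gronwall-type inequality
\begin{equation*}
\mathcal R(t)+\int_0^t\bigl(\|\nabla u\|_2^2+\kappa\|\nabla G\|_2^2+\|\Delta N\|_2^2\bigr)\ds\le C\int_0^t h(s)\,\mathcal R(s)\ds,\qquad t\in[0,T^\ast],
\end{equation*}
with $h\in L^1(0,T^\ast)$; since $\mathcal R(0)=0$, this forces $\mathcal R\equiv 0$ and hence $v=\bar v$, $F=\bar F$, $M=\bar M$ on $[0,T^\ast]$.

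To establish the inequality I would test the weak formulation of $(v,F)$ by $(\bar v,\bar F)$ and the (strong) formulation of $(\bar v,\bar F)$ by $(v,F)$, combine the resulting identities with the energy inequality satisfied by the weak solution and with the energy equality satisfied by the strong solution, and in this way produce evolution inequalities for $\tfrac12\|u\|_2^2$ and $\tfrac12\|G\|_2^2$. Since the Landau--Lifshitz--Gilbert equation holds a.e.\ for both solutions, I may multiply the difference directly by $N$ and by $-\Delta N$ and integrate by parts to obtain an inequality for $\tfrac12\|N\|_{1,2}^2$.

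Every nonlinear term is then reduced to the form $\epsilon(\|\nabla u\|_2^2+\kappa\|\nabla G\|_2^2+\|\Delta N\|_2^2)+h(t)\mathcal R(t)$ with $h\in L^1$. In two space dimensions the main tools are Ladyzhenskaya's inequality $\|\phi\|_4^2\le c\|\phi\|_2\|\nabla\phi\|_2$ and the Sobolev embedding $W^{2,2}(\Omega)\hookrightarrow L^\infty(\Omega)$, which, together with the regularity of the strong solution, yield $\bar F\in L^\infty(\Omega\times(0,T^\ast))$ as well as $\bar M,\nabla\bar M\in L^\infty(\Omega\times(0,T^\ast))$. The elastic stress difference is split as
\begin{equation*}
W'(F)F^\top-W'(\bar F)\bar F^\top=\bigl(W'(F)-W'(\bar F)\bigr)F^\top+W'(\bar F)G^\top
\end{equation*}
and estimated via the Lipschitz bound \eqref{W2}, the $L^\infty$-bound on $\bar F$, and the parabolic bound $F\in L^2(W^{1,2})\cap L^\infty(L^2)\hookrightarrow L^4(\Omega\times(0,T))$. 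The magnetization nonlinearity is expanded as
\begin{equation*}
|\nabla M|^2M-|\nabla\bar M|^2\bar M=\bigl((\nabla M+\nabla\bar M):\nabla N\bigr)M+|\nabla\bar M|^2N,
\end{equation*}
while the cross-product terms $M\times H_\eff-\bar M\times H_\eff[\bar M]$ and $M\times(M\times H_\res)-\bar M\times(\bar M\times H_\res[\bar M])$ are expanded multilinearly so that each piece factorises into a single power of $N$ (or $\nabla N$, $\Delta N$) times strong-solution quantities controlled in $L^\infty$ or $L^q$. The convective terms in the $u$- and $G$-equations are handled in the usual 2D way by Ladyzhenskaya and Young.

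The main obstacle is the cubic magnetization term: controlling $\bigl((\nabla M+\nabla\bar M):\nabla N\bigr)M$ requires $\|\nabla N\|_4$, which in 2D is interpolated between $\|\nabla N\|_2$ and $\|\nabla^2N\|_2$ by Gagliardo--Nirenberg, so the top-order dissipation $\|\Delta N\|_2^2$ must be kept on the left and the absorption has to be done with care. The pointwise constraint $|M|=1$, propagated from the initial datum by the LLG structure, supplies the essential $L^\infty$-bound on $M$ that does not come from the variational regularity alone. Once all nonlinear contributions have been either absorbed into the dissipation or bounded by $h(t)\mathcal R(t)$, Gronwall's lemma closes the argument on $[0,T^\ast]$.
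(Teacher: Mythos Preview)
Your overall strategy---a Gronwall inequality for the relative energy $\mathcal R(t)$, with every nonlinear remainder absorbed via Ladyzhenskaya/Gagliardo--Nirenberg and the extra regularity of the strong solution---is precisely the paper's, and your identification of the cubic LLG term as the delicate point matches the treatment of $I_{12}$ in Section~\ref{sec:uni}. Whether one organises the computation as ``test the difference by the difference'' (paper) or as ``cross-test and combine with energy (in)equalities'' (your proposal) is immaterial: both produce the same quadratic form in $(u,G,N)$.

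There is, however, a genuine technical gap you have not addressed. Your scheme requires an energy inequality for the weak solution's $F$-component, equivalently the pairing $\langle\partial_t F,F\rangle$; this is \emph{not} part of Definition~\ref{NotionSol} and is not automatic. A priori $\partial_t F$ lies only in the dual of $L^4\bigl(0,T;W^{1,2}_0(\Omega)^{2\times 2}\bigr)$---the obstruction is the term $\nabla v\,F$---while $F$ itself is merely in $L^2\bigl(0,T;W^{1,2}_0\bigr)$, so the duality $\langle\partial_t F,F\rangle$ is undefined. The paper closes this gap in Lemma~\ref{Lem:TDer}: integrating the convective term by parts and applying the 2D Ladyzhenskaya inequality upgrades $\partial_t F$ to $\bigl(L^2(0,T;W^{1,2}_0)\cap L^4(0,T;L^4)\bigr)^*$, and since $F\in L^4(0,T;L^4)$ by interpolation the pairing becomes legitimate. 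Without this step neither your relative-energy route nor the paper's direct route gets off the ground for the $F$-equation.

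One smaller inaccuracy: the strong solution has only $\bar F\in L^\infty(0,T^\ast;W^{1,2})\cap L^2(0,T^\ast;W^{2,2})$, so the embedding $W^{2,2}\hookrightarrow L^\infty$ gives $\bar F\in L^2(0,T^\ast;L^\infty(\Omega))$, not $\bar F\in L^\infty(\Omega\times(0,T^\ast))$. This does not actually damage your elastic-stress estimate---$\bar F\in L^\infty(0,T^\ast;L^4)$ already suffices, as in the paper's handling of $I_3$ and $I_5$---but the claim as stated is false.
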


The three theorems will be proven in the following two sections. The proofs of Theorems \ref{Thm:WeakSol} and \ref{Thm:Main}, given in the subsequent section, are based on the proof of the  existence of a weak solution from \cite{BeFoLiSc}. While for the existence of weak solutions only the additional estimate 
\eqref{addestimate} is needed, we show that the sequence of Galerkin approximations converging to a weak solution is bounded uniformly in spaces that are required for the regularity of a strong solutions. Hence, the constructed weak solution is in fact a strong solution but not necessarily on the whole time interval on which the weak solution exists. We notice that the dependence of the elastic energy density on the deformation gradient assumed here extends the assumptions of \cite[Theorem 2]{BeFoLiSc}. \\
In section \ref{sec:uni}, we prove the weak-strong uniqueness result. The issue of proving weak-strong uniqueness, compared to uniqueness of weak solutions, is a typical approach when one may not expect global-in-time regular solutions. For the three-dimensional Navier-Stokes equations, uniqueness of weak solutions is a well-known open problem. However, it is known for the two-dimensional Navier-Stokes equations (see e.g.\ \cite[Chapter 3]{Robinsonetal}). Our system also encompasses the Landau-Lifshitz-Gilbert equation, for which uniqueness of weak solutions  $M \in L^2(0,T; W^{2,2}(\Omega)^3)$ is even a delicate problem in two dimensions (see \cite{lin} for the uniqueness of the related simplified Ericksen-Leslie model). Part of our future work will be concerned with proving uniqueness of weak solutions to our above system by using methods from harmonic analysis.

\section{Proof of Theorems \ref{Thm:WeakSol} and \ref{Thm:Main}}
Both existence proofs are based on the Galerkin method. Therefore, we start presenting them together. Large parts of this construction can be taken over from \cite[Theorem 3.2]{BeFoLiSc}; we thus refer to this and highlight the different arguments needed for our extension.

Let us begin by introducing the basis of the function spaces involved. By $\{\xi_i\}_{i=1}^\infty$ we denote an orthogonal basis of $W^{1,2}_{0,\dvr}(\Omega)^2$ that is orthonormal in $L^2_{\dvr}(\Omega)^2$ consisting of eigenfunctions of the Stokes operator with the homogeneous boundary condition. For $n\in\eN$, we set
\begin{equation*}  
	H^n=\text{span}\{\xi_1,\xi_2,\ldots,\xi_n\}.
\end{equation*}
Following the notation from \cite{BeFoLiSc}, we introduce, for $t_0\in(0,T]$ and $L=\|v_0\|_2+1$,
\begin{align*}
V_n(t_0)=\left\{v(t,x)=\sum_{i=1}^n g_n^i(t)\xi_i(x):[0,t_0)\times\Omega\to\eR^2; \sup_{t\in[0,t_0)}\sum_{i=1}^n|g_n^i(t)|^2\leq L^2, \right. \\ \left. g^i_n(0)=\int_\Omega v_0(x)\xi_i(x)\dx\right\},
\end{align*}
where the $g_n^i$ are Lipschitz solutions of the corresponding ordinary differential equations, see \cite[Definition~4.1]{BeFoLiSc} for details.

 The existence of a Galerkin approximation $v_n\in V_n(t^*)$ and a corresponding pair $(F_n,M_n)$ was shown in Step 1 of the proof of \cite[Theorem 2]{BeFoLiSc} combining the Schauder fixed point theorem with existence and regularity results for the related ordinary differential equations. Under our more general assumptions on $W$ and the addition of the stray field energy, the proof can be performed in the same way. We thus obtain the existence of a Galerkin approximation
$v_n\in V_n(t^*)$ for some $t^*\in (0,T)$ and fixed $n\in\eN$ satisfying
\begin{align}\label{NSFinDim}
	\int_\Omega \partial_t v_n\cdot \xi+(v_n\cdot\nabla)v_n\cdot\xi-\left(\nabla M_n\odot\nabla M_n-W'(F_n)(F_n)^\top-\nabla v_n\right)\cdot\nabla\xi  \notag \\ 
	-(\nabla H_\ext^\top M_n)\cdot\xi=0  \qquad  \text{ in }(0,t^*)\text{ for all }\xi\in H^n
	\end{align}
and a corresponding pair $(F_n,M_n)$ enjoying the regularity
\begin{equation*}
	\begin{split}
	&F_n\in L^\infty(0,t^*;L^2(\Omega)^{2\times 2})\cap L^2(0,t^*;W^{1,2}_0(\Omega)^{2\times 2}),\ \tder F_n\in L^2(0,t^*;(W^{1,2}_0(\Omega)^{2\times 2})^*),\\
	&M_n\in W^{1,\infty}(0,t^*;L^2(\Omega)^3)\cap W^{1,2}(0,t^*;W^{1,2}(\Omega)^3)\cap L^2(0,t^*;W^{3,2}(\Omega)^3)
	\end{split}
\end{equation*}
and satisfying
\begin{align}
	\left\langle\partial_t F_n,\Xi\right\rangle &+\int_\Omega(v_n\cdot\nabla)F_n\cdot\Xi-\int_\Omega\nabla v_nF_n\cdot\Xi+\kappa\int_\Omega\nabla F_n\cdot\nabla\Xi =0\notag\\&\text{ for all }\Xi\in W^{1,2}_0(\Omega)^{2\times 2}\text{ a.e.\ in }(0,t^*) ,\label{DGFinDimW}\\
	\partial_t M_n &+(v_n\cdot\nabla)M_n -\Delta M_n + M_n\times H_\eff  -|\nabla M_n|^2M_n \notag \\
	&+(M_n\cdot H_\res )M_n-H_\res=0\text{ a.e.\ in }(0,t^*)\times\Omega\label{LLGFinDim}
\end{align}
with $M_n$ fulfilling the constraint
\begin{equation}\label{MNDiscrBound}
	|M_n|=1\text{ a.e.\ in }(0,t^*)\times\Omega.
\end{equation} 

The existence proof for the strong solutions requires manipulations with $\Delta F^n$ in order to collect uniform estimates of higher order. A key ingredient for this is Lemma~\ref{KeyLemma} below, which yields, for a fixed $v_n\in V_n(t_0)$, higher regularity of the solution pair $(F,M)$ satisfying \eqref{DefGradSystem} and \eqref{MagnetSystem}.
Thus, assuming $F_0\in W^{1,2}_0(\Omega)^{2\times 2}$, we have the improved regularity of $F^n$ at hand, i.e., 
\begin{equation*}
F_n\in L^\infty(0,t^*;W^{1,2}_0(\Omega)^{2\times 2})\cap L^2(0,t^*;W^{2,2}(\Omega)^{2\times 2}),\ \tder F_n\in L^2(0,t^*;L^2(\Omega)^{2\times 2}),    
\end{equation*}
and 
\begin{equation}\label{DGFinDim}
    \partial_t F_n+(v_n\cdot\nabla)F_n-\nabla v_nF_n-\kappa\Delta F_n =0\text{ a.e.\ in }(0,t^*)\times\Omega
\end{equation}
instead of \eqref{DGFinDimW}.

Let us return to the Galerkin system for fixed $n$. Denote the energy of the system by 
$$ E(t) = \frac{1}{2} \left(\|v_n(t)\|_2^2  +2\|W(F_n(t))\|_1+\|\nabla M_n(t)\|_2^2 + \|(M_n)_3\|_2^2 \right).$$ 
Following the arguments of Step 2 of the proof of \cite[Theorem~3.2., p.~1215]{BeFoLiSc}, we derive, for $t\in(0,t^*)$, the energy inequality
\begin{equation}
    \begin{split} \label{ZerothApEst}
        E(t) &+ \int_0^t \|\nabla v_n\|_2^2+\kappa W''(F_n)\nabla F_n \cdot \nabla F_n \ds \\
        &\leq E(0) + 2 \|H_\ext\|_{L^\infty(0,T;L^1(\Omega))}+ \|\partial_t H_\ext \|_{L^1(0,T; L^1)},
    \end{split}
\end{equation}
which is needed for the a-priori bounds,
as well as
\begin{equation}\label{FirstApEst}
	\begin{split}
		E(t) &+ \int_0^t\|\nabla v_n\|_2^2+\kappa W''(F_n)\nabla F_n \cdot \nabla F_n +\|\Delta M_n\|_2^2\ds \\ 
		&\leq E(0) +\int_0^t\|\nabla M_n\|_4^4+c\|H_\ext \|_2^2 + c\|(M_n)_3\|_2^2 \ds.
	\end{split}	
\end{equation}
Since $W$ is convex, we deduce $W''(F)\nabla F \cdot \nabla F \geq 0$ a.e.\ in $(0,t^*)\times\Omega$ and the third term on the left hand side of (\ref{FirstApEst}) can be omitted. To receive a uniform bound on $\nabla F_n$, we test equation \eqref{DGFinDimW} by $F_n$ and obtain
\begin{equation}
    \begin{split} \label{addestimate}
\frac{1}{2}\frac{\d}{\dt} \| F_n\|_2^2 & + \kappa \|\nabla F_n\|_2^2 \leq \|F_n\|_4^2 \|\nabla v_n\|_2 \leq C \|F_n\|_2 \|\nabla F_n\|_2 \|\nabla v_n\|_2 \\
&\leq \frac{\kappa}{2} \|\nabla F_n\|_2^2 +  C \underbrace{\|F_n\|_2^2 \|\nabla v_n\|_2^2}_{\in L^1(0,t^*)}. 
    \end{split}
    \end{equation}
In combination with \eqref{ZerothApEst}, this implies a uniform $L^2$-bound on $\nabla F_n$. 
In order to estimate the term $\|\nabla M_n\|_4^4$, we employ inequality \eqref{LadMatrix}
to obtain
\begin{equation*}
	\|\nabla M_n\|_4^4\leq c\|\nabla M_n\|_2^2\|\Delta M_n\|_2^2
\end{equation*}
and arrive at 
\begin{equation} \label{InterMedIneq}
	\begin{split}
	E(t) &+\int_0^t\|\nabla v_n\|_2^2+\|\nabla F_n\|_2^2+\|\Delta M_n\|_2^2\ds\\&\leq E(0) +c\int_0^t\|\nabla M_n\|_2^2\|\Delta M_n\|_2^2+ \|H_\ext \|_2^2 + \|(M_n)_3\|_2^2 + 1\ds.
	\end{split}
\end{equation}
The latter inequality is crucial for collecting uniform estimates necessary for the conclusion of the proof of Theorem~\ref{Thm:WeakSol}. The arguments essentially follow those of the proof of \cite[Theorem 3.2]{BeFoLiSc}. Instead of repeating the details, we outline the basic steps: The smallness assumption on the initial data allows us to absorb the term on the right hand side of \eqref{InterMedIneq} involving $\|\Delta M_n\|_2^2$, which leads to the desired bound on $\Delta M_n$ in $L^2(0,T;L^2(\Omega)^3)$. 

Afterwards, we derive a-priori bounds on $\{(\partial_t v_n , \partial_t F_n, \partial_t M_n)\}_{n=1}^\infty$ via duality arguments. Employing these, we apply the Aubin-Lions lemma to obtain convergences that are strong enough to pass to the limit in \eqref{NSFinDim}--\eqref{LLGFinDim}. 

The attainment of the initial data in the sense of Definition~\ref{NotionSol} follows from the energy inequality. The only difference with respect to \cite[Step 6]{BeFoLiSc} is the strong attainment of initial data $F(t) \to F_0$ in $L^2(\Omega)^{2 \times 2}$ as $t \to 0^+$. This fact is a consequence of the weak formulation and Lemma \ref{Lem:TDer} below, i.e., we test the transport equation by $F$ yielding
$$ \frac{1}{2} \frac{\d}{\d t} \|F(t)\|_2^2 = - \kappa \| \nabla F(t) \|_2^2 + \int_\Omega ( \nabla v F \cdot F)(t). $$
As the right-hand side lies in $L^1(0,T)$, the function $t \mapsto\|F(t)\|_2^2 $ is absolutely continuous on $[0,T]$. Since $F(t) \rightharpoonup F_0$ in $L^2(\Omega)^{2 \times 2}$ for $t \to 0^+$, the claim follows. 
The boundary conditions are satisfied in the sense of traces due to the continuity of the trace operator. Hence the proof of Theorem \ref{Thm:WeakSol} can be considered as finished.\\

Next we focus on the proof of Theorem \ref{Thm:Main}. Choosing $\xi = -\Delta v_n$ in equation \eqref{NSFinDim}, we obtain that
\begin{equation}\label{NSSecondAE}
	\begin{split}
\frac{1}{2}\frac{\d}{\d t}\|\nabla v_n\|^2_2 &+\|\Delta v_n\|^2_2\\
 &=I^n_1+I^n_2+I^n_3+\int_\Omega (\Delta v_n\cdot\nabla) M_n\cdot\Delta M_n,
	\end{split}
\end{equation}
where 
\begin{align*}
	I^n_1&=\int_\Omega (v_n\cdot\nabla)v_n\cdot\Delta v_n,\\
	I^n_2&=-\int_\Omega \nabla^\top H_\ext M_n  \cdot\Delta v_n, \\
	I^n_3&=\int_\Omega (W'(F_n) (F_n)^\top)\cdot\nabla\Delta v_n.
\end{align*}
Notice that we used the identity $\dvr(\nabla M_n\odot\nabla M_n)=\nabla\frac{|\nabla M_n|^2}{2}+(\nabla M_n)^T\Delta M_n$ and the solenoidality of $\Delta v_n$ in the third term of \eqref{NSFinDim}  to infer 
\begin{equation*}
\int_\Omega\dvr(\nabla M_n\odot\nabla M_n)\cdot\Delta v_n=\int_\Omega(\nabla M_n)^T\Delta M_n\cdot\Delta v_n=\int_\Omega(\Delta v_n\cdot\nabla) M_n\cdot\Delta M_n.
\end{equation*}
In order to handle the third and the fourth term on the right hand side of \eqref{NSSecondAE}, we work with equations \eqref{DGFinDim} and \eqref{LLGFinDim}. We begin with testing \eqref{DGFinDim} with $-\Delta F_n$ and obtain
\begin{equation}  \label{DGSecondAE}
	\frac{1}{2}\frac{\d}{\dt}\|\nabla F_n\|_2^2+\kappa\|\Delta F_n\|_2^2=\int_\Omega (v_n\cdot\nabla) F_n \cdot\Delta F_n-\int_\Omega (\nabla v_nF_n)\cdot\Delta F_n = I_4^n + I_5^n
\end{equation}
with
\begin{align*}
	I^n_4&=\int_\Omega (v_n\cdot\nabla) F_n \cdot\Delta F_n,\\
	I^n_5&=-\int_\Omega\left(\nabla v_n F_n\right) \cdot \Delta  F_n.
\end{align*}
Next we take the gradient of \eqref{LLGFinDim} and test the resulting equality with $-\nabla \Delta M_n$ to obtain
\begin{align*}
	\frac{1}{2}\frac{\d}{\d t}&\|\Delta M_n\|^2_2 +\|\nabla\Delta M_n\|^2_2\\
	&=\int_\Omega \nabla\Big(-(v_n\cdot\nabla) M_n+|\nabla M_n|^2M_n-M_n\times(\Delta M_n+H_\ext - (M_n\cdot e_3)e_3)
	\\ &-(M_n\cdot H _\ext )M_n+H_\ext + ((M_n\cdot e_3)^2M_n- (M_n\cdot e_3)e_3)\Big)\cdot(-\nabla\Delta M_n).
\end{align*}
We integrate by parts in the first term on the right hand side of the latter equality to get
\begin{equation}\label{LLGSecondAE}
	\begin{split}
	\frac{1}{2}\frac{\d}{\d t}\|\Delta M_n\|^2_2+\|\nabla\Delta M_n\|^2_2
	=&-\int_\Omega (\Delta v_n\cdot\nabla) M_n\cdot\Delta M_n+\sum_{i=6}^{16}I^n_i,
	\end{split}
\end{equation}	
where 
	\begin{align*}
		I^n_6&=-\sum_{k=1}^d\int_\Omega (\partial_k v_n\cdot\nabla) \partial_k M_n\cdot\Delta M_n,\\
		I^n_7&=-\int_\Omega (v_n\cdot\nabla) \Delta M_n\cdot\Delta M_n,\\
		I^n_8&=-\int_\Omega 2\Big((\nabla^2 M_n\nabla M_n)\otimes M_n\Big)\cdot\nabla\Delta M_n,\\
		I^n_{9}&=-\int_\Omega |\nabla M_n|^2\nabla M_n\cdot\nabla\Delta M_n,\\
		I^n_{10}&=\int_\Omega \Big(\nabla M_n\times\big(\Delta M_n+H_\ext - (M_n \cdot e_3)e_3\big)\Big)\cdot\nabla\Delta M_n,\\
		I^n_{11}&=\int_\Omega (M_n\times\nabla H_\ext)\cdot\nabla\Delta M_n,\\
		I^n_{12}&=-\int_\Omega \big(M_n \times \nabla (M_n \cdot e_3)e_3\big) \cdot \nabla \Delta M_n,\\
		I^n_{13}&=\int_\Omega (M_n\cdot H_\ext )\cdot(\nabla M_n\cdot\nabla\Delta M_n),\\
		I^n_{14}&=\int_\Omega \Big((\nabla\Delta M_n)^\top M_n\Big)\cdot\Big((\nabla M_n)^\top H_\ext+(\nabla H_\ext)^\top 								M_n\Big),\\
		I^n_{15}&=\int_\Omega \nabla H_\ext \cdot\nabla \Delta M_n,\\
		I^n_{16}&=\int_\Omega \nabla \Big((M_n\cdot e_3)^2M_n- (M_n\cdot e_3)e_3\Big) \cdot \nabla \Delta M_n.
	\end{align*}
Summing up \eqref{NSSecondAE}, \eqref{DGSecondAE} and \eqref{LLGSecondAE}, we obtain
\begin{equation}\label{TotalSum}
	\begin{split}
	\frac{1}{2}\frac{\d}{\d t} & \big(\|\nabla v_n\|_2^2+\|\nabla F_n\|_2^2+\|\Delta M_n\|^2_2\big)+\|\Delta v_n\|_2^2+\|\Delta F_n\|_2^2+\|\nabla\Delta M_n\|^2_2\\
	& = \sum_{i=1}^{16} I^n_i.
	\end{split}
\end{equation}
We estimate each $I^n_i$ separately. For the estimate of $I^n_1$ we employ Agmon's inequality having the form 
\begin{equation*}
	\|u\|_\infty\leq c\|u\|^\frac{1}{2}_{1,2}\|u\|^\frac{1}{2}_{2,2} \qquad \text{ for all }u\in W^{2,2}(\Omega)^2
\end{equation*}
and \eqref{W22Est} in the appendix to obtain
\begin{align*}
	|I^n_1|& \leq \|v_n\|_{\infty}\|\nabla v_n\|_{2}\|\Delta v_n\|_2\leq c\left(\|v_n\|_{1,2}\|v_n\|_{2,2}\right)^\frac{1}{2}\|\nabla v_n\|_2\|\Delta v_n\|_2 \\
	&\leq c \|\nabla v_n\|_2^\frac{3}{2}\|\Delta v_n\|_2^\frac{3}{2}
	\leq c\|\nabla v_n\|_2^6+\delta\|\Delta v_n\|_2^2
\end{align*}
using the Young inequality at the end,  as usual for some $\delta >0$ small, specified later. The following terms are estimated applying the Young inequality iteratively. For the estimates of $L^4$--norms of $v_n$ and $F_n$, Ladyzhenskaya's inequality \eqref{Lad2D} is applied.  Agmon's inequality is also applied in the estimation of $I^n_3$ 
\begin{align*}
	|I^n_3| & \leq \left| \int_\Omega \dvr(W'(F_n)(F_n)^\top ) \cdot \Delta v_n \right| \leq \int_\Omega (|W''(F_n)||F_n| +
	 |W'(F_n)| )|\nabla F_n | \Delta v_n| \\
	 & \leq c \|F_n \|_\infty \|\nabla F_n\|_2 \|\Delta v_n\|_2 \leq c ( \|\nabla F_n\|_2 \|\Delta F_n\|_2)^\frac12 \|\nabla F_n\|_2
	 \| \Delta v_n \|_2 \\
	 &  \leq c\|\nabla F_n\|_2^6 +\delta \|\Delta v_n\|_2^\frac43 \|\Delta F_n\|_2^\frac23 \leq c\|\nabla F_n\|_2^6 +
	 \delta ( \|\Delta v_n\|_2^2 +  \|\Delta F_n\|_2^2),
\end{align*}
where we made use of the assumptions \eqref{W2} and \eqref{W3}. Moreover, we obtain employing \eqref{GenLad2D} and \eqref{W22Est} for the interpolation of $L^4$--norms of $\nabla v_n$ and $\nabla F_n$ that
\begin{align*}
	|I^n_2|\leq& c\|\nabla H_\ext \|_2^2+\delta \|\Delta v_n\|_2^2,\\
	|I^n_4|\leq& \|v_n\|_{4}\|\nabla F_n\|_4\|\Delta F_n\|_2 \\
	\leq & c\|v_n\|^\frac{1}{2}_2\|\nabla v_n\|^\frac{1}{2}_2\|\nabla F_n\|^\frac{1}{2}_2\left(\|\nabla F_n\|^2_2+\|\Delta F_n\|^2_2\right)^\frac{1}{4}\|\Delta F_n\|_2\\
	\leq& c \|v_n\|_2\|\nabla v_n\|_2\|\nabla F_n\|_2(\|F_n\|_2+\|\Delta F_n\|_2)+\delta\|\Delta F_n\|_2^2\\
	\leq& c(\|v_n\|_2\|\nabla v_n\|_2\|\nabla F_n\|_2\|F_n\|_2+\|v_n\|^2_2\|\nabla v_n\|^2_2\|\nabla F_n\|^2_2)+2\delta\|\Delta F_n\|_2^2\\
	\leq& c(\|v_n\|_2\|F_n\|_2\left(\|\nabla v_n\|_2^2+\|\nabla F_n\|^2_2\right)+\|v_n\|^2_2\left(\|\nabla v_n\|^4_2+\|\nabla F_n\|^4_2\right) )+2\delta\|\Delta F_n\|_2^2,\\
	|I^n_5|\leq& \|\nabla v_n\|_4\|\Delta F_n\|_2\|F_n\|_4 \\
	\leq & c\|\nabla v_n\|_2(\|\nabla v_n\|_2+\|\Delta v_n\|_2)\|F_n\|_2\|\nabla F_n\|_2+\delta\|\Delta F_n\|_2^2,\\
	\leq& c\|F_n\|_2(\|\nabla v_n\|^4_2+\|\nabla F_n\|^2_2)+c\|F_n\|_2^2(\|\nabla v_n\|_2^4+\|\nabla F_n\|_2^4)\\
	&+\delta\left(\|\Delta F_n\|_2^2+\|\Delta v_n\|_2^2\right).
	\end{align*}
The following estimates of $I^n_6$ to $I^n_7$ are related to the Landau-Lifshitz-Gilbert equation \eqref{MagnetSystem}. Proceeding as above, we apply additionally \eqref{LadForLaplacian} for the estimate of the $L^4$--norm of $\Delta M_n$.
	\begin{align*}
	|I^n_6|\leq& c\|\nabla v_n\|_4\|\nabla^2M_n\|_2\|\Delta M_n\|_4 \\
	\leq& c\|\nabla v_n\|^\frac{1}{2}_2\left(\|\nabla v_n\|^2_2+\|\Delta v_n\|_2^2\right)^\frac{1}{4}\|\Delta M_n\|_2^\frac{3}{2}\left(\|\Delta M_n\|^2_2+\|\nabla \Delta M_n\|^2_2\right)^\frac{1}{4}\\
	\leq& c\|\nabla v_n\|_2\left(\|\nabla v_n\|_2+\|\Delta v_n\|_2\right)+c\|\Delta M_n\|^3_2\left(\|\Delta M_n\|_2+\|\nabla \Delta M_n\|_2\right)\\
	\leq& c\left(\|\nabla v_n\|_2^2+\|\Delta M_n\|_2^4++\|\Delta M_n\|_2^6\right)+\delta\left(\|\Delta v_n\|^2_2+\|\nabla\Delta M_n\|^2_2\right),\\
	|I^n_7|\leq& \|v_n\|_\infty\|\nabla\Delta M_n\|_2\|\Delta M_n\|_2 \\ 
	\leq & c\|v_n\|^\frac{1}{2}_{1,2}\|v_n\|^\frac{1}{2}_{2,2}\|\Delta M_n\|_2\|\nabla\Delta M_n\|_2\\
	\leq& c\|\nabla v_n\|_2\|\Delta v_n\|_2\|\Delta M_n\|_2^2+\delta\|\nabla\Delta M_n\|_2^2 \\
	\leq & c \left(\|\nabla v_n\|_2^2+\|\Delta M_n\|_2^4\right)+\delta\left(\|\Delta v_n\|_2^2+\|\nabla\Delta M_n\|_2^2\right).
	\end{align*}
Terms $I^n_8$ and $I^n_{9}$ consist of the critical term on the rhs of \eqref{MagnetSystem}. Employing $|M_n|=1$ a.e.\ in $(0,t^*)\times\Omega$ along with \eqref{AgmonVariant} and \eqref{L6Interp}, we have
	\begin{align*}
	|I^n_8|\leq& c\|\nabla^2 M_n\|_2\|\nabla M_n\|_\infty\|M_n\|_\infty\|\nabla\Delta M_n\|_2\\
	\leq& c\|\Delta M_n\|_2\|\nabla M_n\|_2^\frac{1}{2}\left(\|\nabla M_n\|_2^2 +\|\nabla \Delta M_n\|_2^2\right)^\frac{1}{4}\|\nabla\Delta M_n\|_2\\
	\leq& c\|\Delta M_n\|^2_2\|\nabla M_n\|_2\big(\|\nabla M_n\|_2+\|\nabla \Delta M_n\|_2\big)+\delta\|\nabla\Delta M_n\|^2_2\\
	\leq& c\big(\|\nabla M_n\|_2^2\|\Delta M_n\|_2^2+\|\nabla M_n\|^2_2\|\Delta M_n\|_2^4\big)+2\delta\|\nabla\Delta M_n\|^2_2,\\
	|I^n_{9}|\leq& \|\nabla M_n\|^3_6\|\nabla\Delta M_n\|_2\leq c\|\nabla M_n\|_2\big(\|\nabla M_n\|_2^2+\|\Delta M_n\|_2^2\big)\|\nabla\Delta M_n\|_2\\
	\leq& c\big(\|\nabla M_n\|_2^6+\|\nabla M_n\|_2^2\|\Delta M_n\|^4_2\big)+\delta\|\nabla\Delta M_n\|^2_2.
	\end{align*}
From the remaining terms, only $I^n_{10}$ involves higher order derivatives
	\begin{align*}
	|I^n_{10}|\leq& \|\nabla M_n\|_\infty\left(\|\Delta M_n\|_2+\|H_\ext \|_2 + |\Omega|^\frac{1}{2}\right)\|\nabla\Delta M_n\|_2\\
	\leq& c\|\nabla M_n\|_2^\frac{1}{2}\left(\|\nabla M_n\|_2^2+\|\nabla \Delta M_n\|_2^2\right)^\frac{1}{4}\left(\|\Delta M_n\|_2+\|H_\ext \|_2 + 1\right)\|\nabla\Delta M_n\|_2\\
	\leq& c\|\nabla M_n\|_2\left(\|\nabla M_n\|_2+\|\nabla\Delta M_n\|_2\right)\left(\|\Delta M_n\|_2^2+\|H_\ext \|^2_{2} +1\right)+\delta\|\nabla\Delta M_n\|_2^2\\
	\leq& c\left(\|\nabla M_n\|_2^2\|\Delta M_n\|_2^2+\|\nabla M_n\|_2^2(\|H_\ext \|^2_{2}+1)\right.\\
	&\left.+\|\nabla M_n\|_2^2\|\Delta M_n\|_2^4+\|\nabla M_n\|_2^2(\|H_\ext\|_{2}^4 +1)\right)+2\delta\|\nabla\Delta M_n\|_2^2.
	\end{align*}
The other terms are straightforward (it is always $|M_n| \equiv 1$) with
\begin{align*}
	|I^n_{11}|\leq& c\|H_\ext \|_{1,2}^2+\delta\|\nabla\Delta M_n\|_2^2,\\
	|I^n_{12}| \leq & c \|\nabla M_n\|_2^2 + \delta \|\nabla \Delta M_n\|_2^2,\\
	|I^n_{13}|\leq&
	c\|H_\ext\|_2^2\left(\|\nabla M_n\|_2^2 +\|\Delta M_n\|_2^2\right)+2\delta\|\nabla\Delta M_n\|_2^2,\\
	|I^n_{14}|\leq&
	 c\|H_\ext\|_{1,2}^2\left(1+\|\nabla M_n\|_2^2+\|\Delta M_n\|_2^2\right)+ 2\delta\|\nabla\Delta M_n\|_2^2,\\
	|I^n_{15}|\leq& c\|H_\ext \|_{1,2}^2+\delta\|\nabla\Delta M_n\|_2^2, \\
	|I^n_{16}| \leq & c\|\nabla M_n\|_2^2 + \delta \| \nabla \Delta M_n \|_2^2.
\end{align*}
We integrate \eqref{TotalSum} over $(0,t)$, add \eqref{InterMedIneq} and make use of the estimates for $I^n_1$ to $I^n_{16}$ from above. An iterative application of Young's inequality  and a suitably small choice for $\delta$ then yield 
\begin{equation*}
\begin{split}
	E(t)& + \frac12 \big(\|\nabla F_n(t)\|_2^2+\|\nabla M_n(t)\|_2^2+\|\Delta M_n(t)\|_2^2 \big)\\
	&+c\int_0^t\|\nabla v_n(s)\|_2^2+\|\nabla F_n(s)\|_2^2+\|\Delta v_n(s)\|_2^2+\|\Delta F_n(s)\|_2^2+\|\nabla \Delta M_n(s)\|_2^2\ds\\&\leq E(0) +c\int_0^t1+\left(\|\nabla v_n(s)\|_2^2+\|\nabla F_n(s)\|^2_2+\|\Delta M_n(s)\|^2_2\right)^3\ds
	\end{split}
\end{equation*}
with $c$ depending on the bounds of $\{(v_n,F_n,M_n)\}_{n=1}^\infty$ from \eqref{InterMedIneq}.
It is not difficult to realize that the solution of the initial value problem
\begin{align*}
	z_n'(t)&=c\left(1+z_n^3(t)\right),\\
	z_n(0)&=\|v_0\|_{1,2}^2+\|F_0\|_{1,2}^2+\|\nabla M_0\|_{1,2}^2
\end{align*}
exists on $(0,T^\ast)$ for some $T^\ast\in(0,T]$. With the help of Lemma~\ref{Lem:SolODEComp} we deduce that
\begin{equation*}
	\begin{split}
	E(t)&+ \frac12 \big(\|\nabla v_n(t)\|_2^2+\|\nabla F_n(t)\|_2^2+\|\Delta M_n(t)\|_2^2\big)\\&+\int_0^t\|\nabla v_n(s)\|_2^2+\|\nabla F_n(s)\|_2^2+\|\Delta M_n(s)\|_2^2 \ds\\
	&+\int_0^t \|\Delta v_n(s)\|_2^2+\|\Delta F_n(s)\|_2^2+\|\nabla \Delta M_n(s)\|_2^2\ds\leq c
	\end{split}
\end{equation*}
for all $t \in (0,T^*)$.
Therefore, by \eqref{InterMedIneq}, we conclude
\begin{align*}
	\|v_n\|_{L^\infty(0,T^\ast;W^{1,2}(\Omega))}&\leq c,\\
	\|v_n\|_{L^2(0,T^\ast;W^{2,2}(\Omega))}&\leq c,\\
	\|F_n\|_{L^\infty(0,T^\ast;W^{1,2}(\Omega))}&\leq c,\\
	\|F_n\|_{L^2(0,T^\ast;W^{2,2}(\Omega))}&\leq c,\\
	\|M_n\|_{L^\infty(0,T^\ast;W^{2,2}(\Omega))}&\leq c,\\
	\|M_n\|_{L^2(0,T^\ast;W^{3,2}(\Omega))}&\leq c.
\end{align*}
Hence, there exists an (not explicitly labeled) subsequence $\{(v_n,F_n,M_n)\}_{n=1}^\infty$ such that  
\begin{alignat*}{5}
	v_n&\rightharpoonup^\ast v&&\text{ in } L^\infty(0,T^\ast;W^{1,2}(\Omega)^2),\ 
	&v_n&\rightharpoonup v&&\text{ in }L^2(0,T^\ast;W^{2,2}(\Omega)^2),\\
	F_n&\rightharpoonup^\ast F&&\text{ in }L^\infty(0,T^\ast;W^{1,2}(\Omega)^{2\times 2}),\
	&F_n&\rightharpoonup F&&\text{ in } L^2(0,T^\ast;W^{2,2}(\Omega)^{2\times 2}),\\
	M_n&\rightharpoonup^\ast M&&\text{ in }L^\infty(0,T^\ast;W^{2,2}(\Omega)^3),\
	&M_n&\rightharpoonup M&&\text{ in } L^2(0,T^\ast;W^{3,2}(\Omega)^3).
\end{alignat*}
We note that the above estimates and the estimates on the time derivatives of $v_n$, $F_n$, $M_n$ and $\nabla M_n$ provide strong convergences that are necessary to verify that $(v,F,M)$ satisfies the weak formulation \eqref{WForm}, see Steps 3 and 5 of the proof of \cite[Theorem 3.2]{BeFoLiSc}. Thanks to the regularity of the limit functions $v$, $F$, $M$, we can integrate by parts in space and time to get
\begin{equation}\label{SForm}
	\begin{split}
	&\int_0^{T^*}\int_\Omega (\tder v+ (v\cdot\nabla)v+\dvr\left(\nabla M\odot\nabla M-W'(F)F^\top-\nabla v\right) \\
	 &\qquad \qquad -(\nabla H_\ext^\top M))\cdot\phi\dx\dt=0,\\
	&\int_0^{T^*}\int_\Omega (\tder F+(v\cdot\nabla)F-(\nabla vF)-\kappa \Delta F)\cdot\xi\dx\dt=0,\\
	&\int_0^{T^*}\int_\Omega (\tder M+(v\cdot\nabla M)+ M\times H_\eff -|\nabla M|^2M \\
	& \qquad \qquad -\Delta M + M \times (M \times H_\res) )\cdot\theta\dx\dt=0
	\end{split}
\end{equation}
for all $\phi\in L^2(0,T;W^{1,2}_{0,\dvr}(\Omega)^2)$, $\xi\in L^2(0,T;W^{1,2}_0(\Omega)^{2\times 2})$ and $\theta\in L^2(0,T; L^2(\Omega)^3)$. Obviously, from \eqref{SForm}$_{2,3}$ it follows that equations \eqref{DefGradSystem} and \eqref{MagnetSystem} are satisfied a.e.\ in $(0,T^*)\times\Omega$. 

It remains to show the existence of an associated pressure from \eqref{SForm}. Obviously, the regularity of $v,F,M$ and $H_\ext$ implies that
\begin{equation}\label{PressEq}
\begin{split}
	G(s):=&
	\tder v(s)+(v(s)\cdot\nabla)v(s)\\
	&+\dvr\left(\nabla M(s)\odot\nabla M(s)-W'(F(s))(F(s))^\top-\nabla v(s)\right)\\&-(\nabla H_\ext^\top(s)M(s))\in L^2(\Omega)^2
	\end{split}
\end{equation}
for a.a.\ $s \in [0,T^*]$.
Moreover, \eqref{SForm}$_1$ and the Helmholtz-Weyl decomposition imply that $G(s)=\nabla \tilde p(s)$ for some $\tilde p(s)\in W^{1,2}(\Omega)$ a.e.\ in $[0,T^*]$. Consequently, we have $\|\nabla \tilde p\|_{L^2(0,T^*;L^2(\Omega))}=\|G\|_{L^2(0,T^*;L^2(\Omega))}$. Defining $p=\tilde p-\frac{1}{|\Omega|}\int_\Omega \tilde p$, we obtain $\|p\|_{L^2(0,T^*;L^2(\Omega))}\leq c\|\nabla p\|_{L^2(0,T^*;L^2(\Omega))}$ by the Poincar\'e inequality. Therefore we have shown the existence of an associated pressure and from \eqref{PressEq} we conclude that \eqref{NSSystem} is fulfilled a.e.\ in $(0,T^*)\times\Omega$. \hfill $\Box$
\vspace{1em}

The key ingredient for the existence proofs above is the following lemma.
\begin{lemma}\label{KeyLemma}
	Let the assumptions of Theorem \ref{Thm:Main} be satisfied and let $v\in V^n(t_0)$. Then there is $t^*\in(0,t_0)$ depending on $n,L,t_0,F_0,M_0$ and $H_\ext$ such that we can find a unique pair $(F,M)$ possessing the regularity
	\begin{equation*}
	\begin{split}
	&F\in L^\infty(0,t^*;W^{1,2}_0(\Omega)^{2\times 2})\cap L^2(0,t^*;W^{2,2}(\Omega)^{2\times 2}),\ \tder F\in L^2(0,t^*;L^2(\Omega)^{2\times 2}),\\
	&M\in W^{1,\infty}(0,t^*;L^2(\Omega)^3)\cap W^{1,2}(0,t^*;W^{1,2}(\Omega)^3)\cap L^2(0,t^*;W^{3,2}(\Omega)^3)
	\end{split}
	\end{equation*}
	satisfying \eqref{DefGradSystem} and \eqref{MagnetSystem} a.e.\ in $(0,t^*)\times\Omega$, i.e., 
	\begin{equation}\label{FMEquations}
		\begin{split}
		&\tder F+(v\cdot\nabla) F=\nabla vF+\kappa\Delta F,\\
		&\tder M+(v\cdot\nabla) M=|\nabla M|^2M+\Delta M-M\times H_\eff-M\times (M \times
		H_\res)
		\end{split}
	\end{equation}
	together with the boundary and initial conditions from \eqref{BoundInitCond}. Moreover, $|M|=1$ a.e.\ in $(0,t^*)\times\Omega$.
	\begin{proof}
		The main steps of the proof follow the proof of \cite[Lemma 4.2]{BeFoLiSc}, which combines the Galerkin method and standard ODE theory for the existence of an approximate solution. Here, we outline the differences to that proof only.
		
		Compared to the corresponding Landau-Lifshitz-Gilbert equation in \cite{BeFoLiSc}, we here have an additional term that is linear in $M$. This allows to conclude from \cite[Lemma 4.2]{BeFoLiSc} that
		$$ M\in W^{1,\infty}(0,t^*;L^2(\Omega)^3)\cap  L^2(0,t^*;W^{3,2}(\Omega)^3).$$
		By \eqref{FMEquations}, we thus obtain that $M\in  W^{1,2}(0,t^*;W^{1,2}(\Omega)^3)$. Hence the asserted regularity of $M$ follows.
		
		It remains to show the improved regularity of $F$. As in \cite{BeFoLiSc}, we consider a smooth approximation $F_k$ of $F$, which satisfies \cite[Equality (86)]{BeFoLiSc}, i.e.,
		\begin{equation}\label{FForGalApprox}
			\int_\Omega \left(\tder F_k+(v\cdot\nabla)F_k -\nabla vF_k\right)\cdot\Xi +\kappa\nabla F_k\cdot\nabla\Xi=0 \text{ in }(0,t^*)
		\end{equation}
		for all $\Xi\in Y^k$, where $Y^k$ is the span of the first $k$ functions that form an orthonormal basis in $L^2(\Omega)^{2\times 2}$ and an orthogonal basis of $W^{1,2}_0(\Omega)^{2\times 2}$ consisting of eigenfunctions of the Laplace operator with homogeneous Dirichlet boundary conditions. The apriori estimate 
		\begin{equation}\label{FirstAEFK}
			\|F_k\|_{L^\infty(0,t^*;L^2(\Omega))}+\|F_k\|_{L^2(0,t^*;W^{1,2}(\Omega))}\leq c
		\end{equation}
		is then derived in a standard way, cf.\ the proof of \cite[Equation (88)]{BeFoLiSc}. Setting $\Xi=-\Delta F_k$ in \eqref{FForGalApprox} and integrating over $(0,t)\subset(0,t^*)$, we get 
		\begin{equation*}
			\frac12 \|\nabla F_k(t)\|_2^2+\kappa\|\Delta F_k\|^2_{L^2(0,t;L^2(\Omega))}=\frac12 \|\nabla F_k(0)\|_2^2+\int_0^t\int_\Omega(v\cdot\nabla)F_k\cdot\Delta F_k -\nabla vF_k\cdot\Delta F_k.
		\end{equation*}
		Employing the regularity of $v$, and the H\"older and Young inequalities, we have
		\begin{align*}
			\frac12 &\|\nabla F_k(t)\|_2^2 +\kappa\|\Delta F_k\|^2_{L^2(0,t;L^2(\Omega))} \\
			&\leq \|F_k(0)\|^2_{W^{1,2}(\Omega)}+c\|v\|^2_{C([0,t_0];C^1(\overline{\Omega}))}\|F_k\|^2_{L^2(0,t^*;W^{1,2}(\Omega))}+\frac{\kappa}{2}\|\Delta F_k\|^2_{L^2(0,t;L^2(\Omega))}.
		\end{align*}
		Similarly, setting $\Xi=\tder F_k$ in \eqref{FForGalApprox} and integrating over $(0,t)\subset(0,t^*)$, we obtain
		\begin{align*}
		\|\tder & F_k\|^2_{L^2(0,t;L^2(\Omega))} +\frac{\kappa}{2}\|\nabla F_k(t)\|_2^2\\
		 &=\frac{\kappa}{2}\|\nabla F_k(0)\|_2^2-\int_0^t\int_\Omega(v\cdot\nabla)F_k\cdot\tder F_k -\nabla v F_k\cdot\tder F_k
		\end{align*}
		and 
		\begin{align*}
		\|& \tder F_k\|^2_{L^2(0,t;L^2(\Omega))}+\frac{\kappa}{2}\|\nabla F_k(t)\|_2^2 \\
		& \leq \frac{\kappa}{2}\|F_k(0)\|^2_{W^{1,2}(\Omega)}+c\|v\|^2_{C([0,t_0];C^1(\overline{\Omega}))}\|F_k\|^2_{L^2(0,t^*;W^{1,2}(\Omega))}+\frac{1}{2}\|\tder F_k\|^2_{L^2(0,t;L^2(\Omega))}.
		\end{align*} 
		Therefore, additionally to estimate \eqref{FirstAEFK}, we have
		\begin{equation*}
			\|F_k\|_{L^2(0,t^*;W^{2,2}(\Omega))}+\|F_k\|_{L^\infty(0,t^*;W^{1,2}_0(\Omega))}+\|\tder F_k\|_{L^2(0,t^*;L^2(\Omega))}\leq c.
		\end{equation*}
		Hence, there exists a (not explicitly labeled) subsequence of $(F_k)$ such that 
		\begin{equation*}
			\begin{split}
				F_k &\rightharpoonup F\text{ in }L^2(0,t^*;W^{2,2}(\Omega)),\\
				F_k &\rightharpoonup^* F\text{ in }L^\infty(0,t^*;W^{1,2}(\Omega)),\\
				\tder F_k &\rightharpoonup \tder F\text{ in }L^2(0,t^*;L^2(\Omega)).
			\end{split}
		\end{equation*}
	Finally, multiplying \eqref{FForGalApprox} by a smooth compactly supported function in $(0,t^*)$, we can employs the above convergences in the limit $k\to \infty$. Moreover, we use the density of $\bigcup_{k=1}^\infty Y^k$ in $W^{1,2}_0(\Omega)^{2\times 2}$ and perform an integration by parts in the term with $\kappa$ to obtain \eqref{FMEquations}$_1$.
	\end{proof}
\end{lemma}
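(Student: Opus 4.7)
The plan is to exploit a crucial observation about \eqref{FMEquations}: for a fixed $v \in V^n(t_0)$, which is smooth in $x$ (a finite linear combination of smooth Stokes eigenfunctions) and Lipschitz in $t$, the two equations completely decouple, so the $F$-equation and the $M$-equation can be solved independently by Galerkin methods. I would follow the scheme of \cite[Lemma 4.2]{BeFoLiSc} and focus on the new points: the improved regularity of $F$ (which is the actual purpose of this lemma within the strong-solutions proof) and the harmless stray-field term in the $M$-equation.

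For the $F$-equation I would use a basis $\{\eta_k\}$ of $W^{1,2}_0(\Omega)^{2\times 2}$ formed of eigenfunctions of the Dirichlet Laplacian, orthonormal in $L^2$, and write $Y^k = \operatorname{span}\{\eta_1, \ldots, \eta_k\}$. The ansatz $F_k(t) = \sum_{j \leq k} a_j(t) \eta_j$ turns \eqref{FForGalApprox} into a \emph{linear} ODE system with bounded, measurable (indeed Lipschitz in $t$, smooth in $x$) coefficients, so global-in-time existence is immediate. I would then collect three a priori estimates: testing with $F_k$ gives the basic bound in $L^\infty_t L^2_x \cap L^2_t W^{1,2}_x$; testing with $-\Delta F_k$ and using $\|v\|_{C([0,t_0]; C^1(\overline{\Omega}))} < \infty$ together with Young's inequality to absorb $\tfrac{\kappa}{2}\|\Delta F_k\|_2^2$ gives the bound in $L^\infty_t W^{1,2}_0 \cap L^2_t W^{2,2}$; testing with $\partial_t F_k$ and absorbing $\tfrac{1}{2}\|\partial_t F_k\|_2^2$ yields $\partial_t F_k \in L^2_t L^2_x$. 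Weak compactness, density of $\bigcup_k Y^k$ in $W^{1,2}_0(\Omega)^{2\times 2}$, and an integration by parts on the $\kappa$-term identify the limit as the desired strong solution of \eqref{FMEquations}$_1$. Uniqueness is immediate from the linearity of the equation via an energy estimate on the difference of two solutions.

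For the $M$-equation I would mimic the construction of \cite[Lemma 4.2]{BeFoLiSc} word for word, using a basis of $W^{2,2}_n(\Omega)^3$ from eigenfunctions of the Neumann Laplacian and the same short-time absorption of the quadratic nonlinearities into the parabolic dissipation. The only new ingredient is the stray-field contribution $-(M \cdot e_3) e_3$ appearing in $H_{\eff}$ and $H_{\res}$. This is a bounded linear zeroth-order perturbation of $M$, so it is trivially controlled by Gronwall on top of all the a priori estimates of \cite{BeFoLiSc}. The constraint $|M| \equiv 1$ is preserved: taking the dot product of the LLG equation with $M$ and using that $M \cdot (M \times \cdot) = 0$, the quantity $w := |M|^2 - 1$ satisfies the linear parabolic equation $\partial_t w + (v \cdot \nabla) w = \Delta w + 2 |\nabla M|^2 w$ with $w(0) = 0$, whence $w \equiv 0$ by uniqueness for this equation. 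Once $M \in W^{1,\infty}(0,t^*; L^2) \cap L^2(0,t^*; W^{3,2})$ is in hand, the extra regularity $M \in W^{1,2}(0,t^*; W^{1,2})$ is read off from \eqref{FMEquations}$_2$ directly, since every term on the right-hand side lies in $L^2(0,t^*; W^{1,2}(\Omega)^3)$ thanks to the two-dimensional embedding $W^{2,2} \hookrightarrow L^\infty$ and the $L^2(W^{3,2})$ bound on $M$.

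The main obstacle, as in \cite{BeFoLiSc}, is the quasilinear term $|\nabla M|^2 M$: passing to the limit requires strong convergence of $\nabla M_k$, which in turn requires uniform control of $\partial_t M_k$ in a negative-order Bochner space together with a uniform bound of $M_k$ in $L^2(0,t^*; W^{3,2})$ on a sufficiently short interval. Here the smallness of $t^*$ is what makes the absorption of $|\nabla M|^2 M$ into the parabolic dissipation viable. Since the stray-field addition is only a bounded linear operator, it does not interfere with this absorption and the scheme of \cite{BeFoLiSc} goes through with only cosmetic modifications.
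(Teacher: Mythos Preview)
Your proposal is correct and follows essentially the same approach as the paper: both defer to \cite[Lemma 4.2]{BeFoLiSc} for the $M$-equation (noting that the stray-field term is a harmless linear perturbation and reading off $W^{1,2}(0,t^*;W^{1,2})$ from the equation), and both upgrade the regularity of $F$ via the same three Galerkin tests---with $F_k$, $-\Delta F_k$, and $\partial_t F_k$---against the Dirichlet-Laplacian basis, followed by weak compactness and density. Your write-up is somewhat more explicit (the $|M|=1$ argument, linearity giving uniqueness for $F$), but the strategy is identical.
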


\section{Proof of Theorem \ref{Thm: WSUniqueness}} \label{sec:uni} 

In order to prove the weak-strong uniqueness, we need to be able to choose components of a weak solution to \eqref{NSSystem}--\eqref{MagnetSystem} as corresponding test functions in the weak formulation \eqref{WForm}. At first, this is not possible since the $F$ component of a weak solution does in general not belong to $L^4\big(0,T;W^{1,2}_0(\Omega)^{2\times 2}\big)$, which is the predual of the space that $\tder F$ belongs to, see Step 3 of the proof of \cite[Theorem 3.2]{BeFoLiSc} for details. Therefore, the duality pairing $\int_0^T\left\langle \tder F, F\right\rangle$ is not defined. This is overcome by the following lemma.
\begin{lemma}\label{Lem:TDer}
	Let $(v,F,M)$ be a weak solution to \eqref{NSSystem}--\eqref{MagnetSystem}. Then it follows that 
	\begin{equation}\label{TDerEst}
	\tder F\in \left(L^2\big(0,T;W^{1,2}_0(\Omega)^{2\times2}\big)\cap L^4\left(0,T;L^4(\Omega)^{2\times 2}\right)\right)^*.
	\end{equation}

	\begin{proof}
If $\xi \in C^\infty_c((0,T)\times\Omega)^{2\times 2}$ we have for the distribution $\tder F$ 
\begin{equation*}
\begin{split}
	\left\langle \tder F,\xi \right\rangle=\int_0^{T}\int_\Omega-(v\cdot\nabla)F\cdot\xi+\nabla vF\cdot\xi-\kappa \nabla F\cdot\nabla\xi \dx\ds.
\end{split}
\end{equation*}
We can perform the integration by parts in the first term on the right hand side and, using the solenoidality of $v$ and the fact that the trace of $v$ is zero on $\partial\Omega$, we arrive at
\begin{equation*}
	-\int_0^{T}\int_\Omega(v\cdot\nabla)F\cdot\xi = - \int_0^T \int_\Omega \dvr (v \otimes
	F)\cdot \xi = \int_0^T \int_\Omega v \otimes F \cdot \nabla \xi.
\end{equation*}
By the H\"older inequality and the 2-D version of the Ladyzhenskaya inequality \eqref{Lad2D}, we obtain that
\begin{equation*}
\begin{split}
	|\left\langle \tder F,\xi \right\rangle|\leq& \int_0^{T}\|v\|_{4}\|F\|_4\|\nabla\xi\|_2+\|\nabla v\|_2\|F\|_4\|\xi\|_4+\kappa \|\nabla F\|_2\|\nabla\xi\|_2\ds\\
	\leq& \int_0^{T} \left(\|v\|^\frac{1}{2}_{2}\|\nabla v\|^\frac{1}{2}_2\|F\|_2^\frac{1}{2}\|\nabla F\|^\frac{1}{2}_2+\kappa \|\nabla F\|_2\right)\|\nabla \xi\|_2\\
	&+c\|\nabla v\|_{2}\|F\|_2^\frac{1}{2}\|\nabla F\|_2^\frac{1}{2}\|\xi\|_4 \ds \\
	\leq& \Bigl(c\|v\|^\frac{1}{2}_{L^\infty(0,T;L^2(\Omega))}\|F\|^\frac{1}{2}_{L^\infty(0,T;L^2(\Omega))}\|\nabla v\|^\frac{1}{2}_{L^2(0,T;L^2(\Omega))}\|\nabla F\|^\frac{1}{2}_{L^2(0,T;L^2(\Omega))}\\
	&+\kappa\|\nabla F\|_{L^2(0,T;L^2(\Omega))}\Bigr)\|\xi\|_{L^2(0,T;W^{1,2}(\Omega))}\\
	&+c\|F\|^\frac{1}{2}_{L^\infty(0,T;L^2(\Omega))}\|\nabla v\|_{L^2(0,T;L^2(\Omega))}\|\nabla F\|^\frac{1}{2}_{L^2(0,T;L^2(\Omega))}\|\xi\|_{L^4(0,T;L^4(\Omega))}\\
	\leq& c (\|\xi\|_{L^4(0,T;L^4(\Omega))}+\|\xi\|_{L^2(0,T;W^{1,2}(\Omega))}).
\end{split}
\end{equation*}
Using the fact that $ C^\infty_c((0,T)\times\Omega)^{2\times 2}$ is dense in the spaces $L^2(0,T;W^{1,2}_0(\Omega)^{2\times2})$ and $ L^4(0,T;L^4(\Omega)^{2\times 2})$, we conclude \eqref{TDerEst}.
	\end{proof}	
\end{lemma}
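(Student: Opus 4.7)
The plan is to use the weak formulation of the transport equation for $F$ to identify $\tder F$ as a distribution on smooth compactly supported test functions, bound its action by the norm of $L^2(0,T;W^{1,2}_0(\Omega)^{2\times 2}) \cap L^4(0,T;L^4(\Omega)^{2\times 2})$, and then extend by density.

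First, for $\xi \in C^\infty_c((0,T)\times\Omega)^{2\times 2}$, the second equation in \eqref{WForm} (applied to tensor-product test functions and extended by linearity) gives
\begin{equation*}
\langle \tder F,\xi\rangle = \int_0^T \int_\Omega -(v\cdot\nabla)F\cdot\xi + \nabla v F\cdot\xi - \kappa \nabla F\cdot\nabla\xi \dx \ds.
\end{equation*}
The $\kappa$-term is tame, being controlled by $\|\nabla F\|_{L^2(L^2)} \|\nabla \xi\|_{L^2(L^2)}$. The delicate one is the convective term, since $F$ only has one spatial derivative in $L^2$. Because $v$ is divergence-free and has vanishing trace, I would integrate by parts to rewrite this term as $\int v\otimes F\cdot\nabla\xi$, thereby transferring the derivative onto $\xi$.

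Next, pointwise in time the two transport-type integrals are estimated by H\"older's inequality, and the two-dimensional Ladyzhenskaya inequality \eqref{Lad2D} interpolates the $L^4$ norms of $v$ and $F$ between $L^2$ and $W^{1,2}$. The term containing $\nabla\xi$ is then naturally bounded by the $L^2(0,T;W^{1,2})$ norm of $\xi$, while the term containing $\xi$ is bounded by the $L^4(0,T;L^4)$ norm. Integrating in time and applying H\"older in $t$, the remaining factors in $v$ and $F$ are finite thanks to $v\in L^\infty(L^2)\cap L^2(W^{1,2})$ and $F\in L^\infty(L^2)\cap L^2(W^{1,2})$ from the definition of a weak solution.

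Finally, density of $C^\infty_c((0,T)\times\Omega)^{2\times 2}$ in the intersection $L^2(0,T;W^{1,2}_0(\Omega)^{2\times 2}) \cap L^4(0,T;L^4(\Omega)^{2\times 2})$ extends the bound from smooth test functions to the full space, yielding \eqref{TDerEst}. I expect the main technical point to be the treatment of the convective term: without the integration-by-parts trick that exploits $\dvr v = 0$, one would need control of $\nabla F$ in a mixed norm stronger than what the weak-solution class supplies, so the solenoidality of $v$ is precisely what makes the argument work.
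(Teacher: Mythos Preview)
Your proposal is correct and follows essentially the same approach as the paper: identify $\tder F$ via the weak formulation on smooth compactly supported test functions, integrate by parts in the convective term using $\dvr v=0$ and the zero trace of $v$, estimate the three resulting terms with H\"older and the 2D Ladyzhenskaya inequality so that the $v\otimes F\cdot\nabla\xi$ and $\kappa\nabla F\cdot\nabla\xi$ pieces are controlled by $\|\xi\|_{L^2(W^{1,2})}$ while the $\nabla v F\cdot\xi$ piece is controlled by $\|\xi\|_{L^4(L^4)}$, and conclude by density. Your remark that the integration-by-parts trick is what makes the convective term manageable given only $F\in L^2(W^{1,2})$ is exactly the point.
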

 
Let us assume that there is a strong solution $(v^1,F^1,M^1)$ and a weak solution $(v^2,F^2,M^2)$ to system \eqref{NSSystem}--\eqref{MagnetSystem} that exist on $(0,T)$ and are equipped with the same initial conditions. In what follows we denote the system for the corresponding solution with the corresponding superscript. By a density argument we are able to test the weak formulation of the  difference \eqref{NSSystem}$^1-$\eqref{NSSystem}$^2$ with $v:=v^1-v^2$ on $(0,t)\times\Omega$, $t\in (0,T)$, to obtain
\begin{equation}\label{NSEnergy}
	\frac{1}{2}\|v(t)\|_{2}^2+\int_0^t\|\nabla v\|^2_{2}=\sum_{i=1}^3I_i,
\end{equation}
where we denote
\begin{align*}
	I_1&:=-\int_0^t\int_\Omega \left(\left(v^1\cdot\nabla\right)v^1-\left(v^2\cdot\nabla\right)v^2\right)\cdot\left(v^1-v^2\right),\\
	I_2&:=-\int_0^t\int_\Omega\dvr \left(\nabla M^1\odot\nabla M^1-\nabla M^2\odot\nabla M^2\right)\cdot\left(v^1-v^2\right),\\
	I_3&:=\int_0^t\int_\Omega\dvr \left(W'\left(F^1\right)\left(F^1\right)^\top-W'\left(F^2\right)\left(F^2\right)^\top\right)\cdot\left(v^1-v^2\right).
\end{align*}
Note that the term involving the pressure disappeared due to the solenoidality of $v^1$ and $v^2$. In order to handle the term $I_2$, we use equation \eqref{MagnetSystem} for $M^1$ and $M^2$.

We note that $F^2$ is an admissible test function in \eqref{DefGradSystem} since, by interpolation, $F^2$ possesses the regularity required by Lemma \ref{Lem:TDer}. Again using a density argument and testing the difference of \eqref{DefGradSystem} for $F^1$ and $F^2$ with $F:=F^1-F^2$, we obtain
\begin{equation}\label{DefGradDifTest}
\frac{1}{2}\|F(t)\|^2_2+\int_0^t\|\nabla F(s)\|^2_2\d s=\sum_{i=4}^5I_i,
\end{equation}
where 
\begin{align*}
	I_4&=-\int_0^t\int_\Omega ((v^1\cdot\nabla)F^1-(v^2\cdot\nabla)F^2)\cdot F=-\int_0^t\int_\Omega (v\cdot\nabla)F^1\cdot F,\\
	I_5&=\int_0^t\int_\Omega (\nabla v^1F^1-\nabla v^2F^2)\cdot F.
\end{align*}
First, we take the difference of \eqref{MagnetSystem} for $M^1$ and $M^2$ and get
\begin{equation}\label{MagnetEqDiff}
	\begin{split}
		\tder\left(M^1-M^2\right) -&\Delta\left(M^1-M^2\right)\\
		=&\left(v^2\cdot\nabla\right)M^2-\left(v^1\cdot\nabla\right)M^1+|\nabla M^1|^2M^1-|\nabla M^2|^2M^2\\
		& -M^1\times  H_\eff\left(M^1\right)+M^2\times H_\eff\left(M^2\right)\\
		& -\left(M^1\cdot H_\ext \right)M^1+\left(M^2\cdot H_\ext \right)M^2 \\
		&   - \left(M^2\left(M^2 \cdot e_3\right) -M^1\left(M^1 \cdot e_3\right)\right)
	\end{split}
\end{equation}
Testing \eqref{MagnetEqDiff} with $M:=M^1-M^2$ on $(0,t)\times\Omega$, we obtain
\begin{equation}\label{MEqFirstTest}
	\frac{1}{2}\|M(t)\|_{2}^2+\int_0^t\|\nabla M\|_{2}^2=\sum_{i=6}^{10} I_i,
\end{equation}
where 
\begin{align*}
	I_6&=\int_0^t\int_\Omega \left(\left(v^2\cdot\nabla\right)M^2-\left(v^1\cdot\nabla\right)M^1\right)\cdot M=\int_0^t\int_\Omega-\left(v\cdot\nabla\right)M^1\cdot M,\\
	I_7&=\int_0^t\int_\Omega \left(|\nabla M^1|^2M^1-|\nabla M^2|^2M^2\right)\cdot M\\
		&=\int_0^t\int_\Omega |\nabla M^1|^2|M|^2+\left(|\nabla M^1|-|\nabla M^2|\right)M^2\cdot M,\\
	I_8&=\int_0^t\int_\Omega \left(-M^1\times H_\eff\left(M^1\right) +M^2\times H_\eff\left(M^2\right)\right)\cdot M\\
	&=\int_0^t\int_\Omega \left(-M^2\times\Delta M\right)\cdot M + \left(M^2\times \left(M\cdot e_3 \right)e_3\right) \cdot M,\\
	I_9&=\int_0^t\int_\Omega \left(-\left(M^1\cdot H_\ext \right)M^1+\left(M^2\cdot H_\ext \right)M^2\right)\cdot M\\
	&=\int_0^t\int_\Omega -\left(M^1\cdot H_\ext \right)|M|^2-\left(M\cdot H_\ext \right)\left(M^2\cdot M\right),\\
	I_{10} &= \int_0^t \int_\Omega \left(M^1\left(M^1 \cdot e_3\right) -M^2\left(M^2 \cdot e_3\right)\right)\cdot M\\
	&=\int_0^t \int_\Omega \left(M\left(M^1\cdot e_3\right)+M^2\left(M\cdot e_3\right)\right)\cdot M.
\end{align*}
A test of \eqref{MagnetEqDiff} with $-\Delta M$ yields
\begin{equation}\label{MagDefSecTest}	
	\frac{1}{2}\|\nabla M(t)\|^2_{2}+\int_0^t\|\Delta M\|^2_{2}=\sum_{i=11}^{15} I_i,
\end{equation}
where
\begin{align*}
I_{11}&=\int_0^t\int_\Omega \left(\left(v^1\cdot\nabla\right)M^1-\left(v^2\cdot\nabla\right)M^2\right)\cdot \Delta M,\\
I_{12}&=\int_0^t\int_\Omega -\left(|\nabla M^1|^2M^1-|\nabla M^2|^2M^2\right)\cdot \Delta M \\
&=\int_0^t\int_\Omega \left(-|\nabla M^1|^2M-\left(\nabla \left(M^1+M^2\right)\cdot\nabla M\right)M^2\right) \cdot\Delta M,\\
I_{13}&=\int_0^t\int_\Omega\left(M^1\times H_\eff -M^2\times H_\eff\right)\cdot \Delta M \\
&=\int_0^t\int_\Omega \left(M\times\Delta M^1 - M\times \left(M^1\cdot e_3\right)e_3 + M^2 
\left(M \cdot e_3\right)e_3\right)\cdot \Delta M ,\\
I_{14}&=\int_0^t\int_\Omega \left(\left(M^1\cdot H_\ext\right)M^1-\left(M^2\cdot H_\ext\right)M^2\right)\cdot \Delta M\\
&=\int_0^t\int_\Omega\left(\left(M\cdot H_\ext\right)M^1+\left(M^2\cdot H_\ext\right)M\right)\cdot\Delta M,\\
I_{15}&= \int_0^t \int_\Omega -M \cdot \Delta M +\left(M^1\left(M^1 \cdot e_3\right) -M^2\left(M^2 \cdot e_3\right)\right)\cdot \Delta M.
\end{align*}
Denoting 
\begin{align*}
	f(t)&=\frac{1}{2}\big(\|v(t)\|^2_2+\|F(t)\|^2_2+\|M(t)\|^2_2+\|\nabla M(t)\|^2_2\big),\\
	g(t)&=\int_0^t\|\nabla v(s)\|^2_2+\|\nabla F(s)\|^2_2+\|\nabla M(s)\|^2_2+\|\Delta M(s)\|^2_2\d s
\end{align*}
and summing up \eqref{NSEnergy}, \eqref{DefGradDifTest}, \eqref{MEqFirstTest} and \eqref{MagDefSecTest}, we obtain that
\begin{equation}\label{FinSum}
	f(t)+g(t)+\sum_{i=1}^{15}I_i=0.
\end{equation}
The aim now is to estimate the sum as follows
\begin{equation*}
	\left|\sum_{i=1}^{15}I_i\right|\leq \int_0^th(s)f(s)\d s+\varepsilon g(t),
\end{equation*}
where $h$ is a positive and integrable function on $(0,T)$ and $\varepsilon>0$ is supposed to be suitably small.

Some of the terms coincide with the ones in the two-dimensional uniqueness proof of \cite[Section 3]{SchlZab}. We refer to them to shorten the presentation starting with \cite[Equation~(13)]{SchlZab}
\begin{equation*}
	|I_1|\leq c\int_0^t\|v(s)\|_2^2\|\nabla v^1(s)\|_2^2+\delta\int_0^t\|\nabla v(s)\|_2^2.
\end{equation*}
Using the identities
\begin{align*}
	\dvr\left(\nabla M^i\odot \nabla M^i\right)=\frac{1}{2}\nabla|\nabla M^i|^2+\nabla^T M^i\Delta M^i,\quad i=1,2,\\
	\nabla^T M^i\Delta M^j\cdot v^k=(v^k\cdot\nabla)M^i\cdot \Delta M^j,\quad i,j,k=1,2,
\end{align*}
we obtain due to cancellation of some mixed terms in accordance to \cite[Equation~(19)]{SchlZab}
\begin{equation*}
	\begin{split}
	|I_2+I_{11}|\leq \int_0^t\int_\Omega |v||\nabla M||\Delta M^2|+\int_0^t\int_\Omega|v^2||\nabla M||\Delta M|.
	\end{split}
\end{equation*}
The first term on the right hand side of the latter inequality is estimated using the H\"older and Young inequalities in combination with interpolation inequalities \eqref{Lad2D} and \eqref{LadMatrix} 
\begin{align*}
	\int_0^t\int_\Omega& |v||\nabla M||\Delta M^2|\leq \int_0^t\|v(s)\|_4\|\nabla M(s)\|_4\|\Delta M^2(s)\|_2\d s\\
	\leq &c\int_0^t\|v(s)\|_2^\frac{1}{2}\|\nabla v(s)\|_2^\frac{1}{2}\|\nabla M(s)\|_2^\frac{1}{2}\|\Delta M(s)\|_2^\frac{1}{2}\|\Delta M^2(s)\|_2\d s\\
	\leq &c\int_0^t\|v(s)\|_2\|\nabla M(s)\|_2\|\Delta M^2(s)\|_2^2\d s+\delta\int_0^t\|\nabla v(s)\|_2\|\Delta M(s)\|_2\ds\\
	\leq &c\int_0^t\left(\|v(s)\|_2^2+\|\nabla M(s)\|^2_2\right)\|\Delta M^2(s)\|_2^2\d s+\delta\int_0^t\|\nabla v(s)\|_2^2+\|\Delta M(s)\|_2^2\d s.
\end{align*}
Similarly, we get
\begin{align*}
	\int_0^t\int_\Omega& |v^2||\nabla M||\Delta M|\leq \int_0^t \|v^2(s)\|_2^\frac{1}{2}\|\nabla v^2(s)\|_2^\frac{1}{2}\|\nabla M(s)\|_2^\frac{1}{2}\|\Delta M(s)\|_2^\frac{1}{2}\|\Delta M(s)\|_2\d s\\
	\leq &c\int_0^t\|v^2(s)\|_2\|\nabla v^2(s)\|_2\|\nabla M(s)\|_2\|\Delta M(s)\|_2\d s+\delta\int_0^t\|\Delta M(s)\|_2^2\d s\\
	\leq &c\int_0^t\|v^2(s)\|^2_2\|\nabla v^2(s)\|^2_2\|\nabla M(s)\|_2^2 \ds+2\delta\int_0^t\|\Delta M(s)\|_2^2\d s.
\end{align*}
The Lipschitz continuity of $W'$ assumed in \eqref{W2} allows us to estimate
\begin{align*}
	|I_3| &= \left| \int_0^t \int_\Omega \left((W'(F^1)-W'(F^2))(F^1)^\top + W'(F^2)F^\top \right) \cdot
	\nabla v \right| \\
	& \leq c \int_0^t \int_\Omega (|F^1| + |F^2|)|F||\nabla v|,\\
	|I_5|& \leq \int_0^t \int_\Omega |F^1||F||\nabla v| + |\nabla v^2||F|^2,
\end{align*}
which yields 
\begin{align*}
|I_3| + |I_5|  \leq & c \int_0^t  \|F(s)\|_2^2 (\|\nabla F^1(s)\|_2^2 \ds+\|\nabla F^2(s)\|_2^2 + \|\nabla v^2(s)\|_2^2)\ds \\
& + \delta \int_0^t \|\nabla v(s)\|_2^2+ \|\nabla F(s)\|_2^2.
\end{align*}
It was deduced in \cite[Section 3, Equation~(14)]{SchlZab} that
\begin{align*}
	|I_4|&\leq c \int_0^t\|\nabla F_1(s)\|_2^2\left(\|v(s)\|_2^2+\|F(s)\|_2^2\right)\d s+\delta\int_0^t\|\nabla v(s)\|_2^2+\|\nabla F(s)\|_2^2\d s.
\end{align*}
Referencing again \cite[Equation~(15)]{SchlZab}, we have 
\begin{align*}
	|I_6|\leq 
	 c\int_0^t\left(\|v(s)\|^2_{2}+\|M(s)\|^2_{2}\right)\|\nabla M_1(s)\|^2_{2}\d s+\delta\int_0^t\|\nabla v(s)\|^2_{2}+\|\nabla M(s)\|^2_{2}\d s.
\end{align*}
For the more crucial terms, that we discuss next, we make again use of the embedding $W^{2,2}$ to $W^{1,4}$ and the fact that $|M_2|=1$ a.e.\ in $(0,T)\times\Omega$
\begin{align*}
	|I_7|\leq& \int_0^t\int_\Omega |\nabla M^1|^2|M|^2+ \left|(\nabla M^1+\nabla M^2)\cdot(\nabla M^1-\nabla M^2) (M^2\cdot M)\right|\\
	\leq& \int_0^t\int_\Omega |\nabla M^1|^2|M|^2+\left(|\nabla M^1|+|\nabla M^2|\right)|\nabla M||M^2||M|\\
	\leq& c\int_0^t\int_\Omega\left(|\nabla M^1|^2+|\nabla M^2|^2\right)|M|^2+\delta\int_0^t\|\nabla M(s)\|^2_{2}\d s\\
	\leq& c\int_0^t \left(\|\nabla M^1(s)\|^2_{4}+\|\nabla M^2(s)\|^2_{4}\right)\|M(s)\|^2_{4}\d s+\delta\int_0^t\|\nabla M(s)\|^2_{2}\d s\\
	\leq& c\int_0^t \left(\|M^1(s)\|^2_{2,2}+\|M^2(s)\|^2_{2,2}\right)\left(\|M(s)\|^2_{2}+\|M(s)\|_2\|\nabla M(s)\|_2\right)\d s \\
	&+\delta\int_0^t\|\nabla M(s)\|^2_{2}\d s\\
	\leq& c\int_0^t \left(\|M^1(s)\|^2_{2,2}+\|M^2(s)\|^2_{2,2}\right)\left(\|M(s)\|^2_{2}+\|\nabla M(s)\|_2^2\right)\d s \\
	 &+\delta\int_0^t\|\nabla M(s)\|^2_{2}\d s.
\end{align*}
Next we perform an integration by parts in $I_8$ and obtain
\begin{align*}
	|I_8|=& \left|\int_0^t\int_\Omega (\nabla M^2\times \nabla M)\cdot M\right| \\
	\leq& c\int_0^t\int_\Omega |\nabla M^2|^2|M|^2+\delta\int_0^t\|\nabla M(s)\|^2_{2}\d s
	+ \int_0^t \|M(s)\|_2^2 \ds\\
	\leq& c\int_0^t\|\nabla M^2(s)\|^2_{4}\|M(s)\|^2_{4}\d s+\int_0^t \delta \|\nabla M(s)\|^2_{2} + \|M(s)\|_2^2 \d s\\
	\leq& c\int_0^t\|M^2(s)\|^2_{2,2}\left(\|M(s)\|_2^2+\|\nabla M(s)\|_2^2\right)\d s+\int_0^t \delta \|\nabla M(s)\|^2_{2} + \|M(s)\|_2^2 \d s.
	\end{align*}
The terms $I_9$ and $I_{10}$ are handled easily by
	\begin{align*}
	|I_9|\leq& 
	c\int_0^t\| H_\ext(s)\|_2\left(\|M(s)\|^2_{2}+\|\nabla M(s)\|^2_{2}\right)\d s,\\
	|I_{10}| \leq& c\int_0^t \|M(s)\|_2^2 \ds.
\end{align*}
Next we estimate
\begin{equation*}
	|I_{12}|\leq \int_0^t\int_\Omega |\nabla M^1|^2|M||\Delta M|+(|\nabla M^1|+|\nabla M^2|)|\nabla M||M^2||\Delta M|.
\end{equation*}
We handle each term separately. Applying \eqref{GenLad2D}, and the H\"older and Young inequalities, we obtain
	\begin{align*}
	\int_0^t\int_\Omega &|\nabla M^1|^2|M||\Delta M| \\
	&\leq c\int_0^t \|\nabla M^1(s)\|^4_{8}\|M(s)\|^2_{4}\d s+\delta\int_0^t\|\Delta M(s)\|^2_2\d s\\
	&\leq c\int_0^t \|M^1(s)\|^4_{2,2}\left(\|M(s)\|^2_{2}+\|\nabla M(s)\|^2_2\right)\ds+\delta\int_0^t\|\Delta M(s)\|^2_2\d s.
\end{align*}
Similarly, we get
\begin{align*}
\int_0^t& \left(|\nabla M^1|+|\nabla M^2|\right)|\nabla M||M^2||\Delta M|\\
	\leq& \int_0^t \left(\|\nabla M^1(s)\|^2_{4}+\|\nabla M^2(s)\|^2_{4}\right)\|\nabla M(s)\|^2_{4}\d s+\delta\int_0^t\|\Delta M(s)\|^2_{2} \d s\\
	\leq& c\int_0^t \left( \|\nabla M^1(s)\|_{2}\left(\|\nabla M^1(s)\|^2_{2} 
	 + \|\Delta M^1(s)\|^2_{2}\right)^\frac{1}{2} \right. \\
	  & \left.  +\|\nabla M^2(s)\|_{2}\left(\|\nabla M^2(s)\|^2_{2}+\|\Delta M^2(s)\|^2_{2}\right)^\frac{1}{2}\right) \\
	&\times \|\nabla M(s)\|_{2}\left(\|\nabla M(s)\|_2^2+\|\Delta M(s)\|^2_{2}\right)^\frac{1}{2}\d s+\delta\int_0^t\|\Delta M(s)\|^2_{2} \d s\\
	\leq& c\int_0^t \left(\|\nabla M^1(s)\|^2_{2}+\|\nabla M^2(s)\|^2_{2}+\|\Delta M^1(s)\|^2_2+\|\Delta M^2(s)\|^2_2+\|\nabla M^1(s)\|_2^4 \right. \\
	&\left. +\|\nabla M^2(s)\|_2^4\right.+\|\nabla M^1(s)\|_2^2\|\Delta M^1(s)\|_2^2+\|\nabla M^2(s)\|_2^2\|\Delta M^2(s)\|_2^2\left.\right)\|\nabla M(s)\|^2_2\ds\\
	&+2\delta\int_0^t\|\Delta M(s)\|^2_{2} \d s.
\end{align*}
Next, we have, due to the embedding of $W^{3,2}$ to $W^{2,4}$,
\begin{align*}
|I_{13}|&\leq \int_0^t \|M(s)\|_4\|\Delta M^1(s)\|_4\|\Delta M(s)\|_2 + 2\|M(s)\|_2
\|\Delta M(s)\|_2 \\
& \leq c\int_0^t\left(\|M(s)\|^2_2+\|\nabla M(s)\|^2_2\right)\|\Delta M^1(s)\|^2_4\d s+\delta\int_0^t\|\Delta M(s)\|_2^2\d s\\
&\leq c\int_0^t\left(\|M(s)\|^2_{2}+\|\nabla M(s)\|^2_2\right)\|M^1(s)\|^2_{3,2}+\delta\|\Delta M(s)\|_2^2\d s.
\end{align*}
Finally, we get
\begin{align*}
	|I_{14}|\leq& \int_0^t\int_\Omega (|M^1|+|M^2|)|H_\ext||M||\Delta M|\\
	\leq& 2\int_0^t \|H_\ext(s)\|^2_{4}\|M(s)\|^2_{4}\d s +\delta\int_0^t\|\Delta M(s)\|^2_{2}\d s\\ \leq& c\int_0^t \| H_\ext(s)\|^2_{1,2}\left(\|M(s)\|_2^2+\|\nabla M(s)\|_2^2\right)\d s+\delta\int_0^t\|\Delta M(s)\|^2_{2}\d s
\end{align*}
and 
\begin{align*}
	|I_{15}| \leq \int_0^t c \|M\|_2^2 + \delta \|\Delta M\|_2^2 \d s.
\end{align*}
Therefore, collecting the estimates of the $I_i$'s and choosing $\delta$ suitably small, we deduce from \eqref{FinSum}
\begin{equation*}
	f(t)+g(t)\leq c\int_0^th(s)f(s)\d s\quad t\in (0,T),
\end{equation*} 
where 
the regularity of $(v^1,F^1,M^1)$ and $(v^2,F^2,M^2)$ ensures that $h\in L^1(0,T)$. Since the solutions $(v^1,F^1,M^1)$ and $(v^2,F^2,M^2)$ are raised from the same initial data, we conclude that $v^1=v^2$, $F^1=F^2$, $M^1=M^2$ a.e.\ in $(0,T)\times\Omega$ by the Gronwall lemma. Hence the asserted weak-strong uniqueness follows. \hfill $\Box$

\appendix

\section{Useful inequalities}

In this section we gather two technical propositions. The first one is the classical comparison lemma and in the second one we collect several embedding inequalities.
\begin{lemma}\label{Lem:SolODEComp}
Let $z: [0,t^*) \to \eR_0^+$ solve 
\begin{equation} \label{ODE1}
z'=c(1+z^3), \quad z(0)\geq 0
\end{equation} 
for some $c>0$. Then if $y \geq 0$ solves
\begin{equation} \label{ODE2}
y'\leq c(1+y^3), \quad z(0)\geq y(0) \geq 0,
\end{equation} 
on $[0,t^*)$, it is $y\leq z$ on $[0,t^*)$.
\end{lemma}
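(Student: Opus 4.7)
The plan is to set $w := y - z$ and prove $w(t) \leq 0$ on $[0, t^*)$ via a Gronwall-type argument applied to this difference. By hypothesis $w(0) = y(0) - z(0) \leq 0$, so it suffices to control the sign of $w$ as $t$ increases.

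Subtracting \eqref{ODE1} from \eqref{ODE2} and factoring the resulting cubic yields
\[ w'(t) \leq c\bigl(y(t)^3 - z(t)^3\bigr) = \phi(t)\, w(t), \qquad \phi(t) := c\bigl(y(t)^2 + y(t)z(t) + z(t)^2\bigr). \]
The nonnegativity of $y$ and $z$ forces $\phi \geq 0$ on $[0, t^*)$, and continuity of the two solutions makes $\phi$ continuous and therefore locally integrable. The standard integrating-factor trick — multiplying through by $\exp(-\int_0^t \phi\,ds)$ and noting that the resulting quantity is nonincreasing — then delivers
\[ w(t) \leq w(0) \exp\!\Bigl(\int_0^t \phi(s)\,ds\Bigr) \leq 0 \quad \text{for all } t \in [0, t^*), \]
from which $y \leq z$ follows at once.

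The only point that requires a moment's attention is that $\phi$ involves the a priori unknown $y$, so one must verify local integrability on the whole interval where both solutions are posed; but this is automatic from continuity of $y$ on $[0,t^*)$, so no circularity arises. An alternative approach, should one prefer strict inequality, is to perturb the upper solution by considering $z_\varepsilon$ solving $z_\varepsilon' = c(1+z_\varepsilon^3)+\varepsilon$ with $z_\varepsilon(0) > y(0)$, argue by contradiction along a first crossing time, and pass to the limit $\varepsilon \to 0^+$. The direct Gronwall route above, however, seems cleaner and sufficient for the application in the proof of Theorem~\ref{Thm:Main}.
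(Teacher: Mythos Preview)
Your proof is correct and follows the same route as the paper: subtract the two relations, factor the cubic as $(y-z)(y^2+yz+z^2)$, and exploit the nonnegativity of the quadratic factor. The paper's one-line conclusion that $(z-y)'\geq 0$ is in fact a bit circular without the integrating-factor/Gronwall step you supply, so your version is the more careful of the two.
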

\begin{proof}
Taking the difference of \eqref{ODE1} and \eqref{ODE2} yields
$$ z'-y'\geq c (z^3-y^3) = c(z-y)\underbrace{(z^2 + zy +y^2)}_{\geq 0}.$$
Since $z(0)-y(0) \geq 0$ holds by assumption, we conclude $z-y \geq 0$ since $(z-y)'\geq 0$ on $[0,t^*)$.
\end{proof}
\begin{lemma}\label{Lem:Embeddings}
Let $\Omega\subset\eR^2$ be a domain with the smooth boundary. Then   
\begin{align}  
\|f\|_{L^4(\Omega)}&\leq c\|f\|_{L^2(\Omega)}^\frac{1}{2}\|\nabla f\|_{L^2(\Omega)}^\frac{1}{2} \label{Lad2D}
\end{align}
for all $f\in W^{1,2}_0(\Omega)^m$, $m \in \eN$,
\begin{align}
\|f\|_{L^4(\Omega)}&\leq c\left(\|f\|_{L^2(\Omega)}+\|f\|_{L^2(\Omega)}^\frac{1}{2}\|\nabla f\|_{L^2(\Omega)}^\frac{1}{2}\right) \label{GenLad2D}
\end{align}
for all $f\in W^{1,2}(\Omega)^m$,
\begin{align}
\|\nabla f\|_{W^{2,2}(\Omega)}&\leq c\left(\|\nabla f\|^2_{L^2(\Omega)}+\|\nabla\Delta f\|_{L^2(\Omega)}^2\right)^\frac{1}{2},\label{LongIneq}\\
\|\Delta f\|_{L^4(\Omega)}&\leq c\|\Delta f\|_{L^2(\Omega)}^\frac{1}{2}\left(\|\Delta f\|_{L^2(\Omega)}^2+\|\nabla \Delta f\|^2_{L^2(\Omega)} \right)^\frac{1}{4},\label{LadForLaplacian}\\
\|\nabla f\|_{L^\infty(\Omega)}&\leq c\|\nabla f\|_{L^2(\Omega)}^\frac{1}{2}\left(\|\nabla f\|_{L^2(\Omega)}^2
+\|\nabla\Delta f\|_{L^2(\Omega)}^2\right)^\frac{1}{4},\label{AgmonVariant}
\end{align}
for all $f\in W^{3,2}(\Omega)^m$.
Moreover, for all $f\in W^{1,2}_0(\Omega)^m \cap W^{2,2}(\Omega)^m$ we have 
\begin{equation}\label{W22Est}
    \|f\|_{W^{2,2}(\Omega)}\leq c\|\Delta f\|_{L^2(\Omega)},
\end{equation}
and for all $f\in W^{2,2}(\Omega)^m,(\nabla f)n=0$ on $\partial\Omega$ we have
\begin{align}
        \|\nabla f\|_{L^4(\Omega)}&\leq c\|\nabla f\|_{L^2(\Omega)}^\frac{1}{2}\|\Delta f\|_{L^2(\Omega)}^\frac{1}{2},\label{LadMatrix}\\
\|\nabla f\|_{L^6(\Omega)}&\leq c\|\nabla f\|_{L^2(\Omega)}^\frac{1}{3}\left(\|\nabla f\|_{L^2(\Omega)}^2+\|\Delta f\|_{L^2(\Omega)}^2\right)^\frac{1}{3}\label{L6Interp}.
	\end{align}
	\begin{proof}
		Inequalities \eqref{Lad2D}, \eqref{GenLad2D} and \eqref{LadForLaplacian} are variants of Ladyzhenskaya's interpolation inequality. The regularity of a solution to the Laplace equation implies that $\|\cdot\|_{W^{2,2}(\Omega)}$ and $\|\cdot\|_{L^2(\Omega)}+\|\Delta\cdot\|_{L^2(\Omega)}$ are equivalent norms on $W^{2,2}(\Omega)$, which yields \eqref{LongIneq}. Next, \eqref{AgmonVariant} follows from Agmon's inequality in 2D and \eqref{LongIneq}. Inequality \eqref{W22Est} is a consequence of the regularity of a solution to the Laplace equation with the homogeneous Dirichlet boundary condition. Inequality \eqref{LadMatrix} follows from \eqref{GenLad2D} by the application of the Poincar\'e inequality for functions with vanishing normal component of the trace in the form 
		\begin{equation*}
		    \|\nabla f\|_{L^2(\Omega)}\leq c\|\nabla ^2 f\|_{L^2(\Omega)}
		\end{equation*}
		and the fact that $\|\nabla ^2 f\|_{L^2(\Omega)}^2=\|\Delta f\|_{L^2(\Omega)}^2$ due to the homogeneous Neumann boundary condition.
		Finally, \eqref{L6Interp} is a consequence of the Gagliardo-Nirenberg theorem.
	\end{proof}
\end{lemma}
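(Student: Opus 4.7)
The plan is to verify each inequality by recognizing it as either a known interpolation inequality of Gagliardo--Nirenberg or Agmon type in dimension two, or as a consequence of standard elliptic regularity for the Laplacian on the $C^\infty$-domain $\Omega$, and then stringing these building blocks together.

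I would first address the Ladyzhenskaya-type inequalities \eqref{Lad2D} and \eqref{GenLad2D}. The bound $\|f\|_{L^4}\leq c\|f\|_{L^2}^{1/2}\|\nabla f\|_{L^2}^{1/2}$ for $f\in W^{1,2}_0(\Omega)$ in two dimensions is the classical Ladyzhenskaya inequality and follows from the Gagliardo--Nirenberg inequality with parameters $p=4$, $q=r=2$, $\theta=1/2$. For \eqref{GenLad2D}, I would argue via an extension operator to $\eR^2$ (available since $\partial\Omega\in C^\infty$) and apply the same Gagliardo--Nirenberg estimate to the extension; the additional $\|f\|_{L^2}$ term on the right compensates for the cost of the extension. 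Inequality \eqref{W22Est} is then the standard global $W^{2,2}$-regularity estimate for the Poisson problem with homogeneous Dirichlet boundary condition, which identifies $\|f\|_{W^{2,2}}$ with $\|\Delta f\|_{L^2}$ up to a constant depending only on $\Omega$.

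Next I would apply the Laplace-regularity idea to $\nabla f$ (component-wise) in order to obtain \eqref{LongIneq}: since $\Delta(\nabla f)=\nabla\Delta f$, elliptic regularity on $\Omega$ yields $\|\nabla f\|_{W^{2,2}}^2\leq c\bigl(\|\nabla f\|_{L^2}^2+\|\nabla\Delta f\|_{L^2}^2\bigr)$. With \eqref{LongIneq} in hand, \eqref{LadForLaplacian} is obtained by applying \eqref{GenLad2D} to $\Delta f$ and using the $W^{2,2}$ control of $\nabla f$ to bound $\|\nabla\Delta f\|_{L^2}$ in the higher-order term; similarly, \eqref{AgmonVariant} follows by combining the 2D Agmon inequality $\|u\|_\infty\leq c\|u\|_{L^2}^{1/2}\|u\|_{W^{2,2}}^{1/2}$ with $u=\nabla f$ and inserting \eqref{LongIneq}.

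The remaining inequalities \eqref{LadMatrix} and \eqref{L6Interp} exploit the Neumann-type condition $(\nabla f)n=0$ on $\partial\Omega$. The crucial observation is the identity $\|\nabla^2 f\|_{L^2}^2=\|\Delta f\|_{L^2}^2$, which follows from two integrations by parts because the boundary terms involving $\nabla f\cdot n$ vanish. Together with a Poincar\'e-type estimate $\|\nabla f\|_{L^2}\leq c\|\nabla^2 f\|_{L^2}$ valid under this boundary condition, applying \eqref{GenLad2D} to $\nabla f$ yields \eqref{LadMatrix}. For \eqref{L6Interp}, I would invoke the two-dimensional Gagliardo--Nirenberg estimate $\|g\|_{L^6}\leq c\|g\|_{L^2}^{1/3}\|g\|_{W^{1,2}}^{2/3}$ (a direct consequence of $W^{1,2}\hookrightarrow L^p$ for every $p<\infty$ in 2D, together with interpolation) applied to $g=\nabla f$, and then substitute the above identity to replace $\|\nabla^2 f\|_{L^2}$ by $\|\Delta f\|_{L^2}$.

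The main obstacle I expect is bookkeeping of boundary contributions when integrating by parts for the higher-order estimates \eqref{LongIneq} and \eqref{LadMatrix}: one must check that the imposed boundary conditions suffice to eliminate all unwanted surface terms, and that the elliptic regularity theory is invoked with the correct boundary conditions (Dirichlet for \eqref{W22Est}, Neumann for the second block). The remaining manipulations are just iterated applications of H\"older and Young's inequalities.
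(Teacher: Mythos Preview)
Your proposal is correct and follows essentially the same route as the paper's proof: both treat \eqref{Lad2D}, \eqref{GenLad2D}, \eqref{LadForLaplacian} as instances of the Ladyzhenskaya/Gagliardo--Nirenberg inequality, derive \eqref{LongIneq} and \eqref{W22Est} from elliptic regularity for the Laplacian, obtain \eqref{AgmonVariant} by combining Agmon's inequality with \eqref{LongIneq}, and prove \eqref{LadMatrix} and \eqref{L6Interp} via \eqref{GenLad2D} (resp.\ Gagliardo--Nirenberg) applied to $\nabla f$ together with the identity $\|\nabla^2 f\|_{L^2}=\|\Delta f\|_{L^2}$ and a Poincar\'e inequality under the Neumann condition. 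Your closing remark about checking boundary contributions is apt and matches the level of detail the paper provides.
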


\section*{Acknowledgments} The research leading to the results presented in this paper was supported by DFG grant SCHL 1706/4-1. This paper was finalized while AS was visiting the Isaac Newton Institute for Mathematical Sciences in Cambridge; she is thankful for support and hospitality during the programme DNM supported by EPSRC grant numbers EP/K032208/1 and EP/R014604/1.



\end{document}